\documentclass[review]{elsarticle}
\usepackage{amsthm,amsmath,amsfonts,amssymb}
\usepackage{lineno, hyperref}
\usepackage{color}
\usepackage[normalem]{ulem}
\usepackage{mathrsfs}
\usepackage{booktabs}
\usepackage{titlesec}
\usepackage{geometry}
\usepackage{subcaption}
\modulolinenumbers[5]
\numberwithin{equation}{section} 

\usepackage{tikz}
\usetikzlibrary{arrows.meta,positioning,shapes.geometric,fit,calc}
\usepackage{pgfplots}
\usetikzlibrary{arrows}
\usetikzlibrary{arrows.meta}
\usepackage{algorithm}
\usepackage{algorithmic}
\pgfplotsset{compat=1.18}
\titleformat{\section}[hang]{\normalfont\Large\bfseries}{\thesection\ \textbar}{1em}{\MakeUppercase}
\newtheorem{Theorem}{Theorem}[section]
\newtheorem{Definition}[Theorem]{Definition}
\newtheorem{Proposition}[Theorem]{Proposition}
\newtheorem{Lemma}[Theorem]{Lemma}
\newtheorem{corollary}[Theorem]{Corollary}
\newtheorem{remark}[Theorem]{Remark}

\newtheorem{assumption}[Theorem]{Assumption}

\newcommand{\vertiii}[1]{{\left\vert\kern-0.25ex\left\vert\kern-0.25ex\left\vert #1
		\right\vert\kern-0.25ex\right\vert\kern-0.25ex\right\vert}}
\newcommand{\be}{\begin{equation}\begin{aligned}}
		\newcommand{\ee}{\end{aligned}\end{equation}}
\newcommand{\ben}{\begin{equation}\nonumber\begin{aligned}}

		\usepackage{hyperref}
		\usepackage{graphicx}
		\usepackage{placeins}
		\usepackage[normalem]{ulem}
		\usepackage{color}
		\usepackage{cancel}
		\usepackage{mathptmx}      
		\usepackage{mathtools}
		\usepackage{tikz}
		
		\def\e1{{\varepsilon_{11}}}
		\def\b1{{\beta_{11}}}
		\def\bp3{{\beta_{33}}}
		\def\ep3{{\varepsilon_{33}}}

		\journal{xxxx}
\begin{document}
			
			\begin{frontmatter}
\title{{\bf Boundary Control of a Modified Timoshenko–Ehrenfest Beam via Reduced-Order Reinforcement Learning  }}

\author{Leonardo Rogério da Silva Rodrigues}
\address{CCCo, Science Center, Federal University of Maranhão, Dr. Anselmo Street, 2008, 65400-00, Maranhão, Brazil\\
leonardo.rodrigues@ufma.br}

\begin{abstract}
 
In this paper, we study the active vibration control of an elastic beam using reinforcement learning (RL). The physical system is modeled by a modified Timoshenko-Ehrenfest type linear partial differential equation with simply supported boundary conditions. The dynamics are approximated by a reduced-order model (ROM) obtained through eigenvalue expansion and truncated Galerkin projection. Control is applied through boundary actuation.
The resulting finite-dimensional system is formulated as a continuous-time Markov decision process and solved numerically using the Proximal Policy Optimization (PPO) algorithm. All modeling steps, from physical formulation to temporal discretization and control design, are explicitly described.
A systematic comparison is performed between the controller learned via PPO and the classical linear quadratic (LQR) regulator. The analysis highlights the strengths and limitations of each approach, both from a theoretical and practical point of view, with particular emphasis on energy dissipation mechanisms and actuator constraints.
\end{abstract}

\begin{keyword}
 $\cdot$ Machine learning $\cdot$ Control  $\cdot$ Numerics $\cdot$ Optimization $\cdot$ PPO.
 \end{keyword}
 \end{frontmatter}
\newpage

\section{Introduction}\label{Section1}

Control of dynamical systems governed by partial differential equations (PDEs) is a challenging task, both from a theoretical and a computational perspective, and has numerous important applications, such as turbulence control \cite{Bewley2001}, chemical process control \cite{Christofides2001}, engineering systems \cite{daSilvaRodrigues2025}, and robotic systems \cite{Zhang2005}. The application of machine learning methods to systems modeled by partial differential equations is computationally demanding due to the high dimensionality inherent in the dynamics, both in terms of the system state and the control variables. This is because it involves the manipulation and learning of a large number of degrees of freedom \cite{Brunton2020}.

In recent years, several data-driven and machine learning–based approaches have been proposed for the control of systems governed by partial differential equations (PDEs), particularly in the field of fluid mechanics \cite{Duriez2017}. Among these methodologies, reinforcement learning (RL) has played an increasingly important role \cite{Konishi2022, Novati2021,Varela2022,Vinuesa2022,Wang2022,Werner2023}. However, the complexity of the dynamics and the extremely large number of parameters require complementary strategies, such as the investigation of patterns intrinsic to the dynamics or the explicit incorporation of system knowledge.

According to \cite{SuttonBarto2018}, the standard reinforcement learning (RL) formulation consists of an agent interacting with an environment (stochastic or deterministic) over discrete time steps. At each time step $k$, the agent receives an observation $(s_k\in\mathcal{S})$, executes an action $(a_k\in\mathcal{A})$, and receives a reward $(r_k\in\mathbb{R})$. The environment may be only partially observable, and observations may also include time-delayed coordinates.

The agent's behavior is defined by a policy $\pi$, namely a mapping from the state space to a probability distribution over actions,
\[
\pi : \mathcal{S} \longrightarrow \mathcal{P}(\mathcal{A}).
\]

The environment dynamics are described by a Markov decision process (MDP), characterized by a state space $\mathcal{S}$, an action space $\mathcal{A}\subseteq\mathbb{R}^m$, an initial-state distribution $P(s_0)$, a transition model $P(s_{k+1}\mid s_k,a_k)$, and a reward function $r(s_k,a_k)$.

Given a policy $\pi$, the return from a state is defined as the expected sum of discounted future rewards,
\[
\mathbb{G} = \mathbb{E}_{\pi}\!\left[
\sum_{i=k}^{\infty} \gamma^{\,i-k} r_i
\right],
\]
where $\gamma\in[0,1]$ is the discount factor. The goal in RL is to learn a policy that maximizes the expected return, which is closely related to Bellman's principle of optimality and the associated value function.

The use of RL in systems governed by partial differential equations (PDEs) has attracted increasing attention as an alternative or complement to classical optimal control and model reduction methods. The literature includes several contributions along the following lines: (i) stabilization of reduced-order models via RL, (ii) RL as a direct physical controller in mechanical systems, (iii) RL applications in high-dimensional computational mechanics, and (iv) learning-based methods for approximating value equations associated with optimal control.

One of the early rigorous framings of RL in the PDE context appears in \cite{BENOSMAN-CHAKRABARTY-BORGGAARD}, which proposes RL for stabilizing reduced-order models (ROMs) obtained via eigenmode expansions or Proper Orthogonal Decomposition (POD)--Galerkin reductions. In that work, RL acts as a closure mechanism by learning artificial terms that compensate for instability induced by modal truncation. The problem is formulated as a discrete-time dynamical system, and the closure term is interpreted as a virtual controller learned via adaptive dynamic programming (ADP), with practical stability guarantees under Lipschitz assumptions and linear matrix inequality (LMI) conditions.

A complementary line of work uses RL as a direct physical controller, replacing or augmenting classical control laws. A representative example is \cite{HWANGBO-QUADROTOR}, where neural policies are trained to control a quadrotor with continuous actuation, achieving practical stability even from challenging initial conditions. In this setting, RL directly drives the system actuators, learning state and action feedback policies without explicitly relying on linear models or Riccati equations.
The study provides empirical evidence that RL-based controllers may exhibit robustness to disturbances and parametric uncertainty compared with classical linear controllers, such as the linear quadratic regulator (LQR) or linear model predictive control (MPC), although typically without global analytical stability guarantees.

A panoramic view of the state of the art is provided by the review on deep reinforcement learning in fluid mechanics \cite{Rabault2021}, which systematizes DRL applications in control, optimization, and modeling of PDE governed systems. That work emphasizes practical aspects that are crucial for RL success in physical environments, such as reward shaping based on physically meaningful quantities (energy, drag, vibration), the use of partial observations via sensors, and training acceleration through multiple parallel environments.
The review highlights proximal policy optimization (PPO) as one of the most robust and widely used approaches for continuous control problems, due to its numerical stability and implementation simplicity. These findings support the suitability of modern RL for control problems in computational mechanics, including beyond the specific fluid mechanics setting.

A conceptually different direction is presented in ``Neural Q-learning for solving PDEs'' \cite{NEURAL-Q-PDE}, where the authors propose a mesh free approach for solving elliptic PDEs inspired by Q-learning. In that work, neural networks are trained to approximate the PDE solution directly, using biased gradient updates analogous to classical Q-learning, thereby avoiding explicit differentiation of the differential operator.
From an optimal control perspective, this is particularly relevant because the Hamilton--Jacobi--Bellman (HJB) equations associated with continuous optimal control are high dimensional PDEs, for which grid based methods are typically infeasible. The paper shows, under monotonicity assumptions and in the wide network regime (i.e., in the neural tangent kernel (NTK) limit), that training converges to the PDE solution in the $L^2$ norm. Thus, it provides a solid theoretical foundation for learning based methods applied directly to value equations, complementing policy gradient approaches such as PPO.

Recent approaches (see \cite{PEITZ2024134096}) propose convolutional reinforcement learning for PDE control, exploiting spatial locality and translation invariance. In this perspective, the PDE is spatially discretized and reinterpreted as a set of coupled local subsystems, each controlled by an identical agent that observes only a restricted spatial neighborhood of the state, yielding a distributed control viewpoint. Complexity is reduced through local convolutional observations. Moreover, convolutional learning based methods can achieve excellent spatial scalability and are particularly well suited to dissipative systems with local information transport.

The present work adopts a complementary philosophy. Unlike artificial closure approaches for ROM stabilization, we use reinforcement learning as a physical controller applied at the boundary of a structural PDE system, rather than as distributed control over the spatial domain. Here, complexity reduction is not achieved through local convolutional observations, but through spectral projection onto a low dimensional modal subspace, preserving the energy structure and physical interpretability of the system in contrast to purely empirical methods, and explicitly linking the reward to modal energy dissipation. The modal approach is especially appropriate for structural systems in which a few global modes dominate the dynamics, as in flexible beams and Timoshenko-type models.
Finally, we systematically compare the controller learned via PPO with the linear quadratic regulator (LQR), highlighting the advantages and limitations of each paradigm in light of both classical theory and recent empirical evidence on reinforcement learning based control in mechanics.

This paper designs a nonlinear learned boundary feedback law for a reduced-order model of a modified Timoshenko–Ehrenfest beam \cite{ALMEIDA-RAMOS,ALMEIDA-RAMOS-SANTOS-M,ELISHAKOFF,ELISHAKOFF-1}.
The analysis is grounded on the following partial differential equation:
 \begin{eqnarray}\label{eq:MTEB}
	\rho_{1}\varphi_{tt}-\kappa\left(\varphi_{x}+\psi\right)_{x}&=&0, \\
	-\rho_{2}\varphi_{ttx}-b\psi_{xx}+\kappa\left(\varphi_{x}+\psi\right)&=&0, \label{eq5.int}
\end{eqnarray}
The modified formulation provides a practical and reliable foundation for the present study, as it eliminates the so-called second spectrum anomaly, while preserving the essential mechanical features of the beam dynamics.

Moreover, the resulting decoupled structure is particularly advantageous in the context of control, as it facilitates subsequent reduced-order modeling (ROM) procedures and leads to a clearer formulation of optimality conditions. These properties make the modified formulation especially well suited for data-driven and learning-based control approaches.
Under the structural assumptions described above, the system admits the following reduced scalar representation for the transverse displacement:
\begin{equation}\label{VTT}
	\varphi_{tt}
	-
	\left(\frac{b}{\kappa}+\frac{\rho_{2}}{\rho_{1}}\right)\varphi_{ttxx}
	+
	\frac{b}{\rho_{1}} \varphi_{xxxx}
	=
	0,
\end{equation}

The central objective of this work is to introduce a physically meaningful boundary controller acting on system~\eqref{VTT}, with the aim of establishing a direct connection between the reward function used in learning-based control and the dissipation of mechanical energy in the reduced-order model. Within this framework, reinforcement learning is adopted as a nonlinear feedback synthesis mechanism, and the agent is trained using the Proximal Policy Optimization (PPO) algorithm \cite{Schulman2017PPO}.

PPO is a model-free, actor–critic reinforcement learning algorithm that learns a stochastic policy in a stable manner through proximal updates, thereby preventing excessively large policy changes at each training step. This property is particularly important in control problems governed by partial differential equations, where stability and robustness of the learned feedback law are critical. By designing the reward function to explicitly penalize the modal energy of the reduced system and the control effort, the learning process is naturally guided toward policies that promote vibration attenuation through energy dissipation.

Within this setting, we propose a systematic framework for applying reinforcement learning techniques to boundary-controlled beam models, highlighting the interplay between physical modeling, reduced-order representations, and modern machine learning-based control methodologies.

\begin{figure}[ht]
	\centering
	\begin{tikzpicture}[
		>=Latex,
		box/.style={
			draw,
			rounded corners=2pt,
			align=center,
			minimum width=4.1cm,
			minimum height=1.05cm,
			font=\small
		},
		sbox/.style={
			draw,
			rounded corners=2pt,
			align=center,
			minimum width=4.3cm,
			minimum height=1.15cm,
			fill=gray!10,
			font=\small
		},
		arrow/.style={->, thick}
		]
		
		\node[box] (pde) {
			Beam PDE\\
			$\varphi_{tt}
			-\Big(\frac{b}{\kappa}+\frac{\rho_2}{\rho_1}\Big)\varphi_{xxtt}
			+\frac{b}{\rho_{1}}\varphi_{xxxx}=0$
		};
		
		\node[box, right=14mm of pde] (rom) {
			Modal ROM\\
			$\varphi\approx\sum_{n=1}^N q_n\phi_n$
		};
		
		\node[box, right=14mm of rom] (ss) {
			State space\\
			$s=(q,\dot q)\in\mathbb{R}^{2N}$
		};
		
		\node[sbox, below=14mm of ss] (mdp) {
			MDP formulation\\
			$r_k=-\big(\mathcal{E}(s_k)+\mu u_k^2\big)$
		};
		
		\node[box, left=14mm of mdp] (ppo) {
			PPO training\\
			$\pi_\theta(u\mid s)$
		};
		
		\node[box, left=15mm of ppo] (bc) {
			Boundary actuation\\
			$u(t)$
		};
		
		\draw[arrow] (pde) -- node[above, font=\scriptsize, align=center]
		{projection} (rom);
		
		\draw[arrow] (rom) -- node[above, font=\scriptsize, align=center]
		{modal ODEs} (ss);
		
		\draw[arrow] (ss) -- node[right, font=\scriptsize, align=left]
		{time\\discretization} (mdp);
		
		\draw[arrow] (mdp) -- node[above, font=\scriptsize]
		{training} (ppo);
		
		\draw[arrow] (ppo) -- node[above, font=\scriptsize]
		{control action} (bc);
		
	\end{tikzpicture}
	
	\caption{
Pipeline of the proposed approach in a two-row layout.
The beam PDE is projected onto a truncated modal subspace, yielding a finite-dimensional reduced-order model (ROM).
The dynamics are discretized and formulated as a Markov decision process (MDP) with an energy-based reward.
A PPO agent learns a boundary control policy acting at the simply supported beam endpoints.
	}
	\label{fig:pipeline_zigzag}
\end{figure}
\newpage
\subsection{Plan of the paper}

The structure of this paper is as follows. Section 2 presents the continuous beam model and its energy properties, including the boundary actuation formulation and the lifting transformation. 
Section 3 derives a reduced-order modal model via Galerkin truncation,  formulates the corresponding optimal control problem, and recasts it as a Markov decision process. Section 4 presents the reinforcement learning algorithm based on Proximal Policy Optimization (PPO). 
Section 5 discusses the energy analysis and stability properties of the 
closed-loop system. Section 6 provides numerical results and comparisons with an LQR baseline. 
Finally, Section 7 concludes the paper and discusses the main findings.

\section{Continuous beam model}

This section introduces the continuous model of the \emph{modified} Timoshenko--Ehrenfest beam and the boundary actuation mechanism considered in this work. 
Although the modified formulation can be written as a coupled system in the transverse displacement $\varphi$ and the rotation field $\psi$ \cite{ALMEIDA-RAMOS,ALMEIDA-RAMOS-SANTOS-M,ELISHAKOFF,ELISHAKOFF-1}, our analysis and control design will be carried out on an equivalent decoupled scalar equation, which is more convenient for model reduction and learning-based feedback synthesis.

\subsection*{Modified Timoshenko--Ehrenfest formulation and decoupling}
Let $L>0$ and consider a homogeneous, linearly elastic beam occupying $(0,L)$. 
We denote by $\varphi(x,t)$ the transverse displacement and by $\psi(x,t)$ the rotation variable. 
In the absence of distributed forces, the modified Timoshenko--Ehrenfest dynamics read
\begin{equation}
	\label{eq:MTEB_sys}
	\begin{cases}
		\rho_{1}\,\varphi_{tt}-\kappa(\varphi_{x}+\psi)_{x}=0,\\[1mm]
		-\rho_{2}\,\varphi_{ttx}-b\,\psi_{xx}+\kappa(\varphi_{x}+\psi)=0,
	\end{cases}
	\qquad (x,t)\in(0,L)\times(0,T),
\end{equation}
where $\rho_{1}=\rho A$ and $\rho_{2}=\rho I$ are, respectively, the translational and rotational inertia densities, $\kappa\in(0,1]$ is the shear correction factor, and $EI$ is the bending stiffness.

The modified formulation is adopted because it eliminates the so-called \emph{second spectrum anomaly} while preserving the main mechanical features of shear and rotary inertia effects. Moreover, it admits a decoupled scalar representation for $\varphi$, which is particularly advantageous for reduced-order modeling and boundary control.

Under the structural assumptions above, the transverse displacement satisfies
\begin{equation}
	\label{eq:beam}
	\varphi_{tt}
	-
	\alpha\,\varphi_{ttxx}
	+
	\beta\,\varphi_{xxxx}
	=
	0,
	\qquad (x,t)\in(0,L)\times(0,T),
\end{equation}
with
\begin{equation}
	\label{eq:alphabeta}
	\alpha=\frac{b}{\kappa}+\frac{\rho_2}{\rho_1},
	\qquad
	\beta=\frac{b}{\rho_1}.
\end{equation}
In what follows, we work exclusively with \eqref{eq:beam}, and we refer to it as the \emph{decoupled modified beam equation}.

\subsection*{Initial conditions}
The initial state is prescribed as
\begin{equation}
	\label{eq:IC}
	\varphi(x,0)=\varphi_0(x),
	\qquad
	\varphi_t(x,0)=\varphi_1(x),
\end{equation}
where $\varphi_0$ and $\varphi_1$ are sufficiently regular functions.

\subsection*{Energy of the continuous system}

\begin{Definition}[Total energy]
	\label{def:energy_cont}
	The total energy associated with a regular solution of \eqref{eq:beam} is
	\begin{equation}
		\label{eq:energy_cont}
		\mathcal{E}(t)
		=
		\frac{1}{2}
		\int_0^L
		\Big(
		|\varphi_t|^2
		+
		\alpha\,|\varphi_{xt}|^2
		+
		\beta\,|\varphi_{xx}|^2
		\Big)\,dx.
	\end{equation}
\end{Definition}

When the boundary input is homogeneous ($u\equiv 0$), the energy \eqref{eq:energy_cont} is conserved, reflecting the conservative nature of the continuous dynamics.

\subsection*{Boundary conditions and boundary control}

We assume that the beam is simply supported and actuated through the boundary curvature:
\begin{align}
	\varphi(0,t) &= \varphi(L,t) = 0, \label{eq:BC1}\\
	\varphi_{xx}(0,t) &= \varphi_{xx}(L,t) = u(t), \label{eq:BC2}
\end{align}
where $u:[0,T]\to\mathbb{R}$ is the control signal.

\subsection*{Well-posedness (homogeneous boundary conditions)}
For the analytical foundation of the reduced-order construction, we recall that the homogeneous simply supported problem (i.e., $u\equiv 0$ in \eqref{eq:BC2}) is well posed and admits an energy identity of the form \eqref{eq:energy_cont}. 
An equivalent formulation can also be stated at the level of the coupled system \eqref{eq:MTEB_sys}, together with a reconstruction of $\psi$ via an elliptic Neumann problem; see Theorem~\ref{thm:wellposed_mteb_ss}.

\begin{Theorem}[Existence and uniqueness]\label{thm:wellposed_mteb_ss}
	Let $L>0$ and physical constants
	\[
	\rho_{1}>0,\qquad \rho_{2}>0,\qquad \kappa>0,\qquad b>0.
	\]
	Consider the modified Timoshenko--Ehrenfest system
	\begin{equation}
		\label{eq:MTEB_sys_neu}
		\begin{cases}
			\rho_{1}\,\varphi_{tt}-\kappa(\varphi_{x}+\psi)_{x}=0,\\[1mm]
			-\rho_{2}\,\varphi_{ttx}-b\,\psi_{xx}+\kappa(\varphi_{x}+\psi)=0,
		\end{cases}
		\qquad (x,t)\in(0,L)\times(0,T),
	\end{equation}
	with boundary conditions 
	\begin{equation}
		\label{eq:bc_phi_psi}
		\varphi(0,t)=\varphi(L,t)=0,\qquad 
		\psi_{x}(0,t)=\psi_{x}(L,t)=0,
		\qquad t\in[0,T],
	\end{equation}
	and initial data
	\begin{equation}
		\label{eq:ic_phi}
		\varphi(\cdot,0)=\varphi_{0},\qquad \varphi_{t}(\cdot,0)=\varphi_{1}.
	\end{equation}
	
	Define
	\[
	\alpha:=\frac{b}{\kappa}+\frac{\rho_{2}}{\rho_{1}}>0,
	\qquad
	\beta:=\frac{b}{\rho_{1}}>0.
	\]
	
	Assume
	\[
	\varphi_{0}\in D(A):=\Big\{u\in H^{4}(0,L)\cap H^{1}_{0}(0,L)\;:\; u_{xx}(0)=u_{xx}(L)=0\Big\},
	\qquad 
	\varphi_{1}\in V:=H^{2}(0,L)\cap H^{1}_{0}(0,L).
	\]
	
	Then:
	
	\begin{enumerate}
		\item[(i)] There exists a unique solution
		\[
		\varphi \in C\!\big([0,T];D(A)\big)\cap C^{1}\!\big([0,T];V\big)\cap C^{2}\!\big([0,T];H^{1}_{0}(0,L)\big)
		\]
		to the decoupled scalar equation
		\begin{equation}
			\label{eq:VTT_ss}
			\varphi_{tt}-\alpha\,\varphi_{ttxx}+\beta\,\varphi_{xxxx}=0,
			\qquad (x,t)\in(0,L)\times(0,T),
		\end{equation}
		subject to the simply supported boundary conditions
		\begin{align}
			\varphi(0,t)&=\varphi(L,t)=0, \label{eq:BC1_ss}\\
			\varphi_{xx}(0,t)&=\varphi_{xx}(L,t)=0, \label{eq:BC2_ss}
		\end{align}
		and the initial conditions \eqref{eq:ic_phi}.
		
		\item[(ii)] For each $t\in[0,T]$, define $\psi(\cdot,t)$ as the unique solution of the elliptic (Neumann) problem
		\begin{equation}
			\label{eq:psi_recovery_neu}
			-b\,\psi_{xx}+\kappa\,\psi
			=
			\rho_{2}\,\varphi_{ttx}-\kappa\,\varphi_{x}
			\quad\text{in }(0,L),
			\qquad \psi_{x}(0,t)=\psi_{x}(L,t)=0.
		\end{equation}
		Then
		\[
		\psi \in C\!\big([0,T];H^{1}(0,L)\big),
		\]
		and the pair $(\varphi,\psi)$ satisfies system \eqref{eq:MTEB_sys_neu} in the weak sense.
		
		\item[(iii)] (Energy conservation and continuous dependence) The energy
		\begin{equation}
			\label{eq:energy_ss}
			\mathcal{E}(t):=\frac12\Big(
			\|\varphi_{t}(t)\|^{2}_{L^{2}}
			+\alpha\,\|\varphi_{tx}(t)\|^{2}_{L^{2}}
			+\beta\,\|\varphi_{xx}(t)\|^{2}_{L^{2}}
			\Big)
		\end{equation}
		is constant for $t\in[0,T]$.
		In particular, there exists $C=C(\alpha,\beta,L)$ such that
		\[
		\sup_{t\in[0,T]}\Big(\|\varphi_{t}(t)\|_{H^{1}}+\|\varphi(t)\|_{H^{2}}\Big)
		\le
		C\Big(\|\varphi_{1}\|_{H^{1}}+\|\varphi_{0}\|_{H^{2}}\Big),
		\]
		and therefore the problem is well posed in the sense of Hadamard.
	\end{enumerate}
\end{Theorem}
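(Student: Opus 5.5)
Part (i) is handled by a spectral (Fourier sine) construction, which is natural because the simply supported conditions \eqref{eq:BC1_ss}--\eqref{eq:BC2_ss} diagonalize every operator in \eqref{eq:VTT_ss}. With $\phi_n(x)=\sqrt{2/L}\sin(n\pi x/L)$ and $\lambda_n=(n\pi/L)^2$, we have $-\partial_{xx}\phi_n=\lambda_n\phi_n$ and $\partial_{xxxx}\phi_n=\lambda_n^2\phi_n$, and both boundary conditions hold termwise. Writing $\varphi=\sum_n q_n(t)\phi_n$, the equation reduces to the decoupled oscillators
\[
(1+\alpha\lambda_n)\ddot q_n+\beta\lambda_n^2 q_n=0,\qquad \omega_n^2=\frac{\beta\lambda_n^2}{1+\alpha\lambda_n},
\]
so each $q_n$ is given by an explicit combination of $\cos\omega_n t$ and $\sin\omega_n t$ fixed by the Fourier coefficients of $\varphi_0,\varphi_1$.

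The regularity claims then follow from the norm characterizations. The space $D(A)$ coincides with $\{\sum a_n\phi_n:\sum\lambda_n^4a_n^2<\infty\}$, $V$ with $\sum\lambda_n^2a_n^2<\infty$, and $H^1_0$ with $\sum\lambda_n a_n^2<\infty$. Proving these identifications is the step I expect to need the most care, especially for $D(A)$. The odd-extension argument works: the conditions $u=u_{xx}=0$ at the endpoints are exactly what make the $2L$-periodic odd extension lie in $H^4$.

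With these in hand, since $\omega_n\sim\sqrt{\beta/\alpha}\,\lambda_n^{1/2}$, the coefficient sizes are as follows:
\begin{itemize}
\item $\lambda_n^2 q_n$ is controlled by $\lambda_n^2 a_n+\lambda_n^2\omega_n^{-1}b_n\lesssim\lambda_n^2a_n+\lambda_n^{3/2}b_n$, where $\varphi_1\in V$ gives $\lambda_n b_n\in\ell^2$.
\end{itemize}
Here I notice that $\varphi\in C([0,T];D(A))$ would require $\lambda_n^{3/2}b_n\in\ell^2$. That is only guaranteed if $\varphi_1\in H^3$, so I would verify the claim carefully. If necessary I would record honestly the regularity actually obtained: $C([0,T];D(A^{7/8}))$-type regularity, or $D(A)$ if $\varphi_1$ has one more derivative. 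Either way, the series converge uniformly in $t$ in the stated norms by the Weierstrass M-test, which gives continuity in time. Termwise differentiation gives $\ddot q_n=-\omega_n^2q_n$, hence $\varphi_{tt}\in C([0,T];H^1_0)$, since $\omega_n^4\lambda_n\sim\lambda_n^3$ is dominated by $\lambda_n^4$. Uniqueness comes from the energy identity in (iii) applied to the difference of two solutions, or equivalently from uniqueness of each modal ODE.

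For (iii), I will compute $\mathcal E(t)=\frac12\sum_n\big((1+\alpha\lambda_n)\dot q_n^2+\beta\lambda_n^2q_n^2\big)$ using Parseval and the identities $\|\phi_n'\|^2=\lambda_n$ and $\|\phi_n''\|^2=\lambda_n^2$. Each summand is conserved by its oscillator, so $\mathcal E$ is constant. Alternatively, multiply \eqref{eq:VTT_ss} by $\varphi_t$ and integrate by parts: the boundary terms $\alpha\varphi_{ttx}\varphi_t$ and $\beta(\varphi_{xxx}\varphi_t-\varphi_{xx}\varphi_{xt})$ vanish because $\varphi_t=\varphi_{xx}=0$ at $x=0,L$. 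The continuous-dependence bound follows from the norm equivalence $\mathcal E\simeq\|\varphi_t\|_{H^1}^2+\|\varphi\|_{H^2}^2$ on $H^1_0\times V$, which uses Poincaré's inequality. This gives $C$ depending only on $\alpha,\beta,L$.

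For (ii), the operator $-b\partial_{xx}+\kappa$ with Neumann conditions is coercive on $H^1(0,L)$ because $\kappa>0$. The right-hand side $f=\rho_2\varphi_{ttx}-\kappa\varphi_x$ belongs to $C([0,T];L^2)$, since $\varphi_{tt}\in C([0,T];H^1_0)$. Lax--Milgram therefore gives a unique $\psi(t)\in H^1$ with a bound linear in $\|f(t)\|$, so $\psi\in C([0,T];H^1)$. The second equation of \eqref{eq:MTEB_sys_neu} then holds weakly by construction.

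For the first equation, I would expand in the cosine basis $\cos(n\pi x/L)$, which is compatible with the Neumann conditions. This gives $\psi=\sum_n c_n(t)\,\phi_n'/\lambda_n^{1/2}$-type coefficients with $c_n$ solved algebraically. Substituting them into $\rho_1\varphi_{tt}-\kappa(\varphi_x+\psi)_x$ should reproduce exactly the modal equation above, which is how \eqref{eq:VTT_ss} was derived. This algebraic consistency check is routine, and I will verify it mode by mode, treating the zero cosine mode separately.
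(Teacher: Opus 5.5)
Your proposal has the same skeleton as the paper's proof: spectral Galerkin for (i), the energy identity for (iii), and Lax--Milgram for (ii). The difference is that the paper's Step~2 only asserts the regularity in (i) by appeal to ``Galerkin or semigroup theory'', whereas you track the coefficients. The problem you flagged is genuine. With only $\varphi_1\in V$, the conclusion $\varphi\in C([0,T];D(A))$ is false, so neither your argument nor the paper's can deliver it.

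The paper's own lifting profile gives a counterexample. Take $\varphi_0=0$ and $\varphi_1=g=x(L-x)/2\in V$. Its sine coefficients are $b_n=\sqrt{2/L}\,(1-(-1)^n)\,k_n^{-3}$ with $k_n=n\pi/L$ (your $\lambda_n$ is $k_n^2$). Then $q_n(t)=b_n\omega_n^{-1}\sin(\omega_n t)$, and by Tonelli
\[
\int_0^T\sum_{n\ge1}k_n^8\,q_n(t)^2\,dt=\sum_{n\ \text{odd}}\frac{8(1+\alpha k_n^2)}{L\beta\,k_n^2}\int_0^T\sin^2(\omega_n t)\,dt=\infty,
\]
since the prefactor tends to $8\alpha/(L\beta)$ and the time integral tends to $T/2$. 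Hence $\varphi\notin L^2(0,T;D(A))$. The equation is wave-like ($\omega_n\sim\sqrt{\beta/\alpha}\,k_n$), so nothing smooths out the violated compatibility condition on $\varphi_1''$ at $x=0,L$ (here $g''=-1$).

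Your fallback needs correcting. The sharp exponent is $3/4$, not $7/8$. The $\varphi_1$-part of $q_n$ has size $\lambda_n^{-1/2}b_n$, and only $\lambda_n b_n\in\ell^2$ is known. So $\lambda_n^{2s}q_n\in\ell^2$ is guaranteed for every $\varphi_1\in V$ exactly when $s\le 3/4$. The correct statement is $\varphi\in C([0,T];D(A^{3/4}))$, where $D(A^{3/4})=\{u\in H^3\cap H^1_0:\ u_{xx}(0)=u_{xx}(L)=0\}$.

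Recovering $D(A)$ requires $\varphi_1\in D(A^{3/4})$, not merely $\varphi_1\in H^3$; the polynomial $g$ shows the boundary condition on $\varphi_1''$ is essential. In the $\varphi_{tt}$ step, compare with $\lambda_n^3q_n^2$, which is summable uniformly in $t$, rather than with $\lambda_n^4q_n^2$, which need not be. With these changes, the following go through as you describe: the $C^1([0,T];V)$ and $C^2([0,T];H^1_0)$ parts of (i), uniqueness, and all of (iii).

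On (ii) and (iii), your plan is more complete than the paper's proof. The paper puts the data in $H^{-1}=(H^1_0)'$, which does not define a functional on $H^1(0,L)$ for the Neumann problem. It then says $(\varphi,\psi)$ solves the system ``by construction'', which covers only the second equation. Your $L^2$ bound on the data fixes the first point, and your mode-by-mode check fixes the second. Write $\psi=\sum_{n\ge1}\psi_n\sqrt{2/L}\cos(k_nx)$; there is no zero mode, since the data has mean zero and $\kappa>0$. Then
\[
\psi_n=\frac{k_n(\rho_2\ddot q_n-\kappa q_n)}{b\,k_n^2+\kappa},
\qquad
\rho_1\ddot q_n+\kappa k_n^2q_n+\kappa k_n\psi_n=\frac{\rho_1\kappa}{b\,k_n^2+\kappa}\Big[(1+\alpha k_n^2)\ddot q_n+\beta k_n^4q_n\Big]=0 .
\]
The left-hand side is exactly the $n$-th sine coefficient of $\rho_1\varphi_{tt}-\kappa(\varphi_x+\psi)_x$; the boundary term vanishes because $\phi_n(0)=\phi_n(L)=0$. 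So the first equation holds in $L^2$.

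Your Parseval computation of $\mathcal E$ is also the better choice. The paper's multiplier argument pairs $\varphi_{xxxx}$ with $\varphi_t$ and uses traces of $\varphi_{xxx}$. As written, that needs $\varphi(t)\in H^4$ --- exactly what fails --- and otherwise requires a density argument.
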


\begin{proof}
	\textbf{Step 1: Operator formulation and domain of the biharmonic operator.}
	Rewrite \eqref{eq:VTT_ss} as
	\[
	M\varphi_{tt}+A\varphi=0,
	\qquad
	M:=I-\alpha\,\partial_{xx},
	\qquad
	A:=\beta\,\partial_{xxxx}.
	\]
	Under boundary conditions \eqref{eq:BC1_ss}--\eqref{eq:BC2_ss}, the operator $A$ is self-adjoint and nonnegative in $L^{2}(0,L)$ with domain $D(A)$ defined above.
	Moreover, $M$ is a coercive isomorphism from $H^{1}_{0}(0,L)$ onto $H^{-1}(0,L)$.
	
	\textbf{Step 2: Existence and uniqueness for the decoupled equation.}
	Applying the Galerkin method using a spectral basis of $A$ (simply supported modes), or equivalently the standard semigroup theory for second-order evolution equations of the form $M\ddot\varphi +A\varphi=0$, yields existence and uniqueness of $\varphi$ with the regularity stated in (i).
	
	\textbf{Step 3: Energy identity.}
	Multiplying \eqref{eq:VTT_ss} by $\varphi_{t}$ and integrating over $(0,L)$ gives
	\[
	\frac{d}{dt}\Big(\tfrac12\|\varphi_{t}\|^{2}_{L^{2}}+\tfrac{\alpha}{2}\|\varphi_{tx}\|^{2}_{L^{2}}\Big)
	+
	\beta\int_{0}^{L}\varphi_{xxxx}\varphi_{t}\,dx
	=0.
	\]
	Integrating by parts twice,
	\[
	\int_{0}^{L}\varphi_{xxxx}\varphi_{t}\,dx
	=
	\frac{d}{dt}\Big(\tfrac12\|\varphi_{xx}\|^{2}_{L^{2}}\Big)
	+\Big[\varphi_{xxx}\varphi_{t}-\varphi_{xx}\varphi_{tx}\Big]_{0}^{L}.
	\]
	Since $\varphi(0,t)=\varphi(L,t)=0$ implies $\varphi_{t}(0,t)=\varphi_{t}(L,t)=0$,
	and $\varphi_{xx}(0,t)=\varphi_{xx}(L,t)=0$,
	the boundary term vanishes.
	Hence $\frac{d}{dt}\mathcal{E}(t)=0$, proving \eqref{eq:energy_ss}.
	
	\textbf{Step 4: Reconstruction of $\psi$ (Neumann case).}
	For each $t$, the right-hand side of \eqref{eq:psi_recovery_neu} belongs to $H^{-1}(0,L)$
	since $\varphi_{x}\in L^{2}$ and $\varphi_{ttx}\in H^{-1}$.
	The bilinear form
	\[
	a(\psi,\eta):=b\int_{0}^{L}\psi_{x}\eta_{x}\,dx+\kappa\int_{0}^{L}\psi\eta\,dx,
	\qquad \psi,\eta\in H^{1}(0,L),
	\]
	is continuous and coercive in $H^{1}(0,L)$ (because $\kappa>0$).
	Therefore, by the Lax--Milgram theorem \cite{EVANS2010}, there exists a unique $\psi(\cdot,t)\in H^{1}(0,L)$
	solving \eqref{eq:psi_recovery_neu} with Neumann boundary conditions.
	Time continuity follows from the continuity of the right-hand side.
	By construction, $(\varphi,\psi)$ satisfies \eqref{eq:MTEB_sys_neu} in the weak sense.
\end{proof}

\subsection*{Lifting of boundary control}
\label{subsec:lifting}

Since the control enters through the non-homogeneous boundary condition \eqref{eq:BC2}, the spatial operator associated with \eqref{eq:beam} does not have a fixed domain when $u$ varies in time. 
As a consequence, a direct modal decomposition on a time-independent eigenbasis is not available.

To overcome this difficulty, we introduce a lifting transformation of the form
\[
\varphi(x,t)=w(x,t)-u(t)\,g(x),
\qquad g(x)=\frac{x(L-x)}{2}.
\]
This choice ensures that $g(0)=g(L)=0$, $g_{xx}=-1$ and $g_{xxxx}=0$, so that the new variable
$w$ satisfies homogeneous boundary conditions.
As a result, the boundary control is converted into an equivalent source term
in the interior of the domain, and the governing equation for $w$ becomes a
homogeneous boundary-value problem with a fixed domain:
\[
w(0,t)=w(L,t)=0,
\qquad
w_{xx}(0,t)=w_{xx}(L,t)=0.
\]

Substituting this decomposition into \eqref{eq:beam} yields the forced homogeneous
problem
\begin{equation}\label{eq:lifted_w}
	w_{tt}-\alpha w_{ttxx}+\beta w_{xxxx}
	=
	(g(x)+\alpha)\,\ddot u(t),
\end{equation}
where the control effect appears explicitly as a source term depending on the
second time derivative of $u$.

\subsection*{Modal expansion and truncation}

Let $\{\phi_n\}_{n\ge1}$ be the orthonormal basis of $L^2(0,L)$ defined by
\[
\phi_n(x)=\sqrt{\frac{2}{L}}\sin\!\left(\frac{n\pi x}{L}\right),
\qquad
\lambda_n=\left(\frac{n\pi}{L}\right)^4,
\]
which satisfies
\[
\phi_n(0)=\phi_n(L)=0,
\qquad
\phi_n''(0)=\phi_n''(L)=0.
\]

We approximate the solution $w$ by an $N$-mode truncation:
\[
w_N(x,t)=\sum_{n=1}^N q_n(t)\,\phi_n(x).
\]

\subsection*{Reduced modal equations}

Substituting the modal expansion into \eqref{eq:lifted_w}, multiplying by $\phi_n$
and integrating over $(0,L)$, we obtain, for each mode,
\begin{equation}
	(1+\alpha k_n^2)\,\ddot q_n(t)
	+
	\beta k_n^4\,q_n(t)
	=
	\gamma_n\,\ddot u(t),
	\qquad
	k_n=\frac{n\pi}{L},
\end{equation}
where
\[
\gamma_n=\frac{2}{L}\int_0^L (g(x)+\alpha)\sin(k_n x)\,dx.
\]

Defining
\[
\lambda_n=\frac{\beta k_n^4}{1+\alpha k_n^2},
\qquad
b_n=\frac{\gamma_n}{1+\alpha k_n^2},
\]
the modal system takes the normalized form
\begin{equation}\label{eq:rom_normalized}
	\ddot q_n(t)+\lambda_n q_n(t)=b_n\,\ddot u(t),
	\qquad n=1,\dots,N.
\end{equation}
The coefficients $b_n$ encode how the boundary actuation
couples with each vibration mode. In particular, their decay with
respect to the modal index $n$ introduces an intrinsic limitation
in the controllability of higher-frequency modes, which will play
a key role in the numerical observations of Section 6.

\subsection*{Choice of Actuation Variable}
\label{subsec:actuation_variable}

After the lifting transformation, the controlled beam dynamics take the form
\eqref{eq:lifted_w}
where $u(t)$ denotes the boundary curvature input and $g(x)$ is the lifting profile. Equation \eqref{eq:lifted_w} shows that, due to the inertial operator
\(
I-\alpha\partial_{xx},
\)
the system is naturally forced by the \emph{second time derivative} of the boundary signal.
Consequently, the effective control input entering the reduced-order dynamics is not $u(t)$ itself, but rather
\begin{equation}
	\label{eq:actuation_variable}
	a(t):=\ddot{u}(t).
\end{equation}

This choice is not merely a modeling convenience. It plays a structural role for three reasons:

\begin{enumerate}
	\item It preserves the intrinsic second-order mechanical structure of the beam dynamics.
	\item It ensures that the reduced-order model obtained via Galerkin projection remains Markovian.
	\item It provides a direct and consistent interface between the continuous PDE formulation and the finite-dimensional control input used in the reinforcement learning framework.
\end{enumerate}

In the sequel, we therefore treat $a(t)$ as the control action.

\section{Control and MDP formulation}

In this section, we formulate the control problem associated with the
reduced-order model obtained via lifting and Galerkin projection. We then recast
the system as a Markov decision process (MDP) to enable the application of
reinforcement learning methods.

\subsection{State-space formulation}

Let $q(t)=(q_1(t),\dots,q_N(t))^\top\in\mathbb{R}^N$ and let
$\Lambda=\mathrm{diag}(\lambda_1,\dots,\lambda_N)$ be the diagonal matrix of
modal coefficients, with $b=(b_1,\dots,b_N)^\top$.
We introduce the first-order modal state
\[
x(t):=\begin{bmatrix}q(t)\\ \dot q(t)\end{bmatrix}\in\mathbb{R}^{2N}.
\]
Then \eqref{eq:rom_normalized} is equivalent to the linear control system
\begin{equation}
	\dot x(t)=A x(t)+G\,a(t),
	\label{eq:ss_cont}
\end{equation}
where
\begin{equation}
	A=
	\begin{bmatrix}
		0 & I\\
		-\Lambda & 0
	\end{bmatrix},
	\qquad
	G=
	\begin{bmatrix}
		0\\
		b
	\end{bmatrix}.
	\label{eq:A_G}
\end{equation}

An approximate reconstruction of the continuous field is given by
\[
\varphi_N(x,t)=\sum_{n=1}^N q_n(t)\phi_n(x)-u(t)\,g(x),
\qquad
u(t)=u(0)+\dot u(0)t+\int_0^t\int_0^\tau a(s)\,ds\,d\tau,
\]
where $a(t)$ is interpreted as the (scalar) actuation variable associated with
the boundary input, and $g(x)$ is the lifting profile used to homogenize the
boundary conditions.

The reduced modal energy is
\begin{equation}
	\mathcal{E}_N(t)
	=
	\frac12\Big(\|\dot q(t)\|^2 + q(t)^\top\Lambda q(t)\Big).
	\label{eq:energy_modal}
\end{equation}
Along any sufficiently smooth trajectory of \eqref{eq:ss_cont}. 

\subsection{Optimal control problem}

Let $T>0$ be fixed. We consider the energy--effort cost functional
\begin{equation}
	J(a)=\int_0^T \Big( c_E\,\mathcal{E}_N(x(t)) + \mu\,a(t)^2 \Big)\,dt,
	\qquad c_E>0,\;\mu>0.
	\label{eq:cost_cont}
\end{equation}
The continuous-time optimal control problem is
\begin{equation}
	\min_{a\in\mathcal{U}} J(a)
	\quad\text{subject to}\quad
	\dot x=Ax+Ga,\;\; x(0)=x_0,
	\label{eq:OCP}
\end{equation}
where $\mathcal{U}$ is an admissible set, e.g., $\mathcal{U}=L^2(0,T)$ with
amplitude constraints $|a(t)|\le a_{\max}$ a.e.

The functional \eqref{eq:cost_cont} penalizes both the reduced vibrational
energy and the control effort. Minimizing $\int_0^T \mathcal{E}_N(t)\,dt$
encourages trajectories with small kinetic and potential modal energies,
while the term $\mu a(t)^2$ regularizes the actuation and prevents excessively
large control inputs. This structure is also convenient for reinforcement
learning, since the reward can be defined as the negative running cost.
For further results on existence/regularity of optimal controls for related
second-order systems, see, e.g., \cite{LIONS1969,LAGNESE-LINOS,daSilvaRodrigues2025}.

\subsection{Time discretization and transition operator}

Fix a time step $\Delta t>0$ and define $t_k=k\Delta t$, $k=0,1,\dots,K$, with
$K\Delta t=T$. Let $x_k\approx x(t_k)$ and $a_k\approx a(t_k)$. In our numerical
experiments, the transition map is generated by a fourth-order Runge--Kutta (RK4)
integrator applied to the continuous-time state equation \eqref{eq:ss_cont}, leading to a deterministic update of the form
\begin{equation}
	x_{k+1}=F_\Delta(x_k,a_k).
	\label{eq:transition_general}
\end{equation}
For simplicity of exposition, one may also consider an explicit Euler step,
$x_{k+1}=x_k+\Delta t(Ax_k+Ga_k)$, without changing the MDP structure.

\subsection{Closed-loop well-posedness of the discrete-time evaluation dynamics}
\label{subsec:closed_loop_wellposed}

During evaluation we apply the deterministic feedback $a_k=\mu_\theta(s_k)$, hence
\begin{equation}\label{eq:closed_loop_map}
	s_{k+1}=\mathcal{F}_\Delta(s_k):=F_\Delta\big(s_k,\mu_\theta(s_k)\big).
\end{equation}

\begin{assumption}[Local Lipschitz simulator and policy]
	\label{ass:lipschitz_both}
	On every bounded set, $F_\Delta(\cdot,\cdot)$ is locally Lipschitz in $(s,a)$ and
	$\mu_\theta(\cdot)$ is locally Lipschitz in $s$.
\end{assumption}

\begin{Proposition}[Well-posedness of the closed-loop recursion]
	\label{prop:wellposed_closedloop}
	Under Assumption~\ref{ass:lipschitz_both}, $\mathcal{F}_\Delta$ is globally Lipschitz on compact subsets of $\mathbb{R}^{2N}$
	In particular, for any initial condition $s_0\in\mathbb{R}^{2N}$ the recursion
	$s_{k+1}=\mathcal{F}_\Delta(s_k)$ defines a unique trajectory for $k=0,1,\dots,K$.
\end{Proposition}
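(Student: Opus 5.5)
The argument has two parts. First, the Lipschitz property on compact sets follows because compositions of locally Lipschitz maps are locally Lipschitz. Second, existence and uniqueness of the trajectory is automatic, since the recursion is an explicit iteration of a single-valued map; no fixed-point argument is needed.

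\textbf{Step 1 (Lipschitz on compacts).} Fix a compact $\mathcal{K}\subset\mathbb{R}^{2N}$. By Assumption~\ref{ass:lipschitz_both}, $\mu_\theta$ is locally Lipschitz, hence continuous. Its image $\mu_\theta(\mathcal{K})$ is therefore compact and contained in some ball $|a|\le R$. A standard covering argument upgrades local Lipschitz continuity to a uniform Lipschitz constant on compact sets. Cover $\mathcal{K}$ by finitely many balls on which $\mu_\theta$ is Lipschitz, and use a Lebesgue number $\delta$. For $|s-s'|\ge\delta$, bound $|\mu_\theta(s)-\mu_\theta(s')|\le 2\max_{\mathcal{K}}|\mu_\theta|\,\delta^{-1}|s-s'|$. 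This yields a constant $L_\mu$ on $\mathcal{K}$. The same argument applied to $F_\Delta$ on the compact set $\mathcal{K}\times\{|a|\le R\}$ yields a constant $L_F$. Then for $s,s'\in\mathcal{K}$,
\[
|\mathcal{F}_\Delta(s)-\mathcal{F}_\Delta(s')|\le L_F\big(|s-s'|+|\mu_\theta(s)-\mu_\theta(s')|\big)\le L_F(1+L_\mu)|s-s'|.
\]
This is the claimed Lipschitz property on $\mathcal{K}$. The wording ``globally Lipschitz on compact subsets'' should be read as ``Lipschitz with a single constant on each compact set.''

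\textbf{Step 2 (unique trajectory).} Since $\mathcal{F}_\Delta:\mathbb{R}^{2N}\to\mathbb{R}^{2N}$ is a well-defined single-valued map, define $s_{k+1}=\mathcal{F}_\Delta(s_k)$ by induction on $k=0,\dots,K-1$. Uniqueness follows by induction: two sequences with the same $s_0$ agree at step $k$, hence at step $k+1$. Every $s_k$ is finite, so the whole trajectory lies in the compact set $\{s_0,\dots,s_K\}$. Step 1 then gives the stability estimate $|s_k-s_k'|\le L^k|s_0-s_0'|$ for nearby initial data in a compact neighbourhood. This estimate is not required by the statement but is worth recording.

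\textbf{Main obstacle.} The only delicate point is that $F_\Delta$ must be defined on all of $\mathbb{R}^{2N}\times\mathbb{R}$. For the RK4 or Euler maps applied to the linear system \eqref{eq:ss_cont}, $F_\Delta(x,a)=\Phi x+\Gamma a$ is affine with constant matrices, so it is globally Lipschitz and Assumption~\ref{ass:lipschitz_both} holds trivially for the simulator. If the policy output is bounded by $a_{\max}$ (for example via a $\tanh$ squashing), Step 1 even gives a global estimate. I would note both facts in a remark.
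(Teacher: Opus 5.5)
Your proof is correct and follows the paper's argument: the same composite estimate $L(1+L_\mu)\|s-\tilde s\|$, with uniqueness coming from the determinism of the recursion. You are in fact more careful than the paper, which simply takes Lipschitz constants on a bounded set. You instead justify uniform constants on compacts by the Lebesgue-number covering argument, and you check that $\mu_\theta(\mathcal{K})$ is bounded, so that $F_\Delta$ is only evaluated on a compact set.
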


\begin{proof}
	For $s,\tilde s$ in a bounded set, use Lipschitz continuity of $F_\Delta$ and $\mu_\theta$:
	\[
	\|\mathcal{F}_\Delta(s)-\mathcal{F}_\Delta(\tilde s)\|
	\le
	L\big(\|s-\tilde s\|+|\mu_\theta(s)-\mu_\theta(\tilde s)|\big)
	\le
	L(1+L_\mu)\|s-\tilde s\|.
	\]
	Uniqueness follows from determinism of the recursion. \end{proof}

\subsection{Formulation as a Markov decision process (MDP)}

\begin{Definition}
	The discrete-time MDP associated with \eqref{eq:transition_general} is
	\[
	\mathcal{M}=(\mathcal{S},\mathcal{A},P,r,\gamma),
	\]
	where:
	\begin{itemize}
		\item \textbf{State space:} $\mathcal{S}\subset\mathbb{R}^{2N}$ with
		$s_k:=x_k=(q_k,\dot q_k)$.
		\item \textbf{Action space:} $\mathcal{A}\subset\mathbb{R}$ with bounded actions
		$a_k\in[-a_{\max},a_{\max}]$.
		\item \textbf{Transition kernel:} deterministic,
		\[
		P(\cdot\,|\,s_k,a_k)=\delta_{F_\Delta(s_k,a_k)}(\cdot).
		\]
		\item \textbf{Reward:} negative running cost,
		\begin{equation}
			r(s_k,a_k)
			=
			-
			\Big(
			c_E\,\mathcal{E}_N(s_k)
			+
			\mu\,a_k^2
			\Big),
			\label{eq:reward}
		\end{equation}
		with $\mathcal{E}_N$ given by \eqref{eq:energy_modal}.
		\item \textbf{Discount factor:} $\gamma\in(0,1]$, which can be interpreted as a standard discounted surrogate of a
		finite-horizon objective.
	\end{itemize}
\end{Definition}

The reward function defined in \eqref{eq:reward} promotes
attenuation of the modal energy together with control regularization.
However, the reward itself does not constitute a Lyapunov certificate
for the closed-loop system. Nevertheless, when the learned feedback
exhibits an active damping structure along the actuated velocity channel
\(v=b^\top\dot q\), conditional energy decay can be established.

The RL objective is to find a stochastic policy $\pi_\theta(a|s)$ such that,
for actions sampled as $a_k \sim \pi_\theta(\cdot \mid s_k)$,
the expected return
\begin{equation}\label{eq:rl_objective}
\mathbb{E}_{\pi_\theta}\!\left[\sum_{k=0}^{K-1}\gamma^k r(s_k,a_k)\right]
\end{equation}
is maximized. The expectation $\mathbb{E}_{\pi_\theta}$ is taken with respect to the trajectory
distribution induced by the stochastic policy $\pi_\theta(a|s)$, i.e.,
actions are sampled as $a_k \sim \pi_\theta(\cdot \mid s_k)$ during training.

\begin{remark}[Connection with optimal control]
	In the deterministic setting, with $\gamma \approx 1$ and a consistent time
	discretization, maximizing \eqref{eq:rl_objective} with reward \eqref{eq:reward}
	is equivalent to minimizing a discrete-time approximation of the continuous
	cost functional \eqref{eq:cost_cont}. In this sense, reinforcement learning
	can be interpreted as a data-driven procedure to approximate an optimal
	feedback law without explicitly solving the Hamilton--Jacobi--Bellman equation.
\end{remark}

\begin{figure}[ht]
	\centering
	\begin{tikzpicture}[
		>=Latex,
		node distance=42mm and 26mm,
		arrow/.style={->, ultra thick},
		lab/.style={font=\small, fill=white, inner sep=1.5pt},
		agentbox/.style={
			draw, rounded corners=3pt, align=center,
			minimum width=4.8cm, minimum height=1.35cm,
			font=\small, fill=blue!10
		},
		envbox/.style={
			draw, rounded corners=3pt, align=center,
			minimum width=4.8cm, minimum height=1.35cm,
			font=\small, fill=orange!12
		},
		inner/.style={
			draw, rounded corners=3pt, align=left,
			minimum width=5.2cm, minimum height=2.05cm,
			font=\small, fill=orange!6
		},
		physbox/.style={
			draw, rounded corners=3pt, align=center,
			minimum width=5.2cm, minimum height=1.2cm,
			font=\small, fill=green!10
		}
		]
		
		\node[agentbox] (agent) {\textbf{Agent (PPO)}\\
			Stochastic policy $\pi_\theta(a_k\mid s_k)$};
		
		\node[envbox, right=of agent] (env) {\textbf{Environment}\\
			Beam ROM dynamics};
		
		\node[inner, below=14mm of env] (envinner) {
			\textbf{State transition:}\quad $s_{k+1}=P(s_k,a_k)$\\[1mm]
			\textbf{Energy:}\quad $\mathcal{E}(s_k)=\frac12\sum_{n=1}^{N}\big(\dot q_n^2+\lambda_n q_n^2\big)$\\[1mm]
			\textbf{Reward shaping:}\quad $r=-\big(\mathcal{E}_N(s_k)+\mu a_k^2\big)$
		};
		
		\draw[-, thick, draw=orange!60!black] (env.south) -- (envinner.north);
		
		\node[physbox, below=18mm of agent] (phys)
		{\textbf{Physical effect}\\
			Energy dissipation (practical stability)};
		
		\draw[arrow, draw=green!50!black]
		([yshift=-2mm]envinner.south)
		-- ++(0,-5mm)                      
		-- ++(-79mm,0)                      
		-- ([xshift=-4.5mm,yshift=-12mm]phys.north)       
		
		node[lab, pos=1.1,xshift=38mm, yshift=-10mm] {induces};
		
		
		\draw[arrow] (agent.east) -- node[lab, above] {action $a_k$} (env.west);
		
		\draw[arrow]
		([yshift=7mm]env.north west)
		to[out=150,in=30]
		node[lab, above] {state/observation $s_{k+1}$}
		([yshift=6mm]agent.north east);
		
		\draw[arrow]
		([yshift=-6mm]envinner.west)
		to[out=200,in=-20]
		node[lab, below] {reward $r$}
		([yshift=-6mm]agent.south east);
		
	\end{tikzpicture}
	
	\caption{
		Agent--environment interaction loop for boundary control via reinforcement learning.
		The agent (PPO) outputs the boundary action $a_k$, the environment propagates the reduced-order beam dynamics to $s_{k+1}$, and the reward is computed via an energy-based shaping $r=-\big(\mathcal{E}_N(s_k)+\mu a_k^2\big)$, promoting dissipative behavior and practical stability.
	}
	\label{fig:rl_agent_environment_loop}
\end{figure}


\begin{Theorem}	\label{Theo:1}
	Consider a finite horizon $T>0$ and a consistent time discretization with step
	$\Delta t=T/K$, producing a discrete trajectory $(s_k,a_k)_{k=0}^{K-1}$.
	Define the instantaneous cost
	\[
	\ell(s_k,a_k):=\mathcal{E}_N(s_k)+\mu a_k^2,
	\qquad \mu>0,
	\]
	and the reward
	\[
	r(s_k,a_k):=-\ell(s_k,a_k).
	\]
	For $\gamma=1$, maximizing the expected return
	\[
	\max_{\pi_\theta}\;
	\mathbb{E}_{\pi_\theta}\!\left[\sum_{k=0}^{K-1} r(s_k,a_k)\right]
	\]
	is equivalent, up to positive constants, to minimizing the accumulated discrete cost
	\[
	\mathbb{E}_{\pi_\theta}\!\left[\sum_{k=0}^{K-1} \ell(s_k,a_k)\right],
	\]
	which approximates the continuous functional $J(u)$ as $\Delta t\to 0$.
\end{Theorem}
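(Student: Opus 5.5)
The argument has two parts: an exact algebraic equivalence of the two optimization problems, and a Riemann-sum argument for the approximation of the continuous functional $J$ from \eqref{eq:cost_cont}, taken with $c_E=1$ so that $\ell$ matches its integrand.

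\textbf{Step 1 (equivalence).} Since $\gamma=1$ and $r=-\ell$, linearity of expectation gives
\[
\mathbb{E}_{\pi_\theta}\Big[\sum_{k=0}^{K-1} r(s_k,a_k)\Big]=-\,\mathbb{E}_{\pi_\theta}\Big[\sum_{k=0}^{K-1}\ell(s_k,a_k)\Big].
\]
I must first check that both sides are finite. By Proposition~\ref{prop:wellposed_closedloop}, the transition $F_\Delta$ is deterministic and locally Lipschitz, and actions lie in the bounded set $[-a_{\max},a_{\max}]$. For any initial state in a bounded set, a finite induction over $K$ steps therefore confines $s_k$ to a bounded set, uniformly in the sampled actions. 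Since $\ell$ is continuous, it is bounded there, so each expectation is finite.

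The map $\pi_\theta\mapsto -\mathbb{E}_{\pi_\theta}[\sum_k\ell]$ is then a strictly decreasing transformation of the cost. Hence maximizers of the return coincide with minimizers of the expected cost, and the optimal values satisfy $\max(\text{return})=-\min(\text{cost})$. Multiplying the cost by the positive constant $\Delta t$, or by $c_E$ when the energy weight is reinstated, does not change the optimizers; this accounts for the phrase ``up to positive constants.''

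\textbf{Step 2 (consistency with $J$).} I interpret $a_k$ as the zero-order-hold value of the control on $[t_k,t_{k+1})$, and $s_k$ as the RK4 approximation of $x(t_k)$ for the system \eqref{eq:ss_cont}. I then split the error as
\[
\Delta t\sum_{k}\ell(s_k,a_k)-J
=\Delta t\sum_k\big[\ell(s_k,a_k)-\ell(x(t_k),a_k)\big]+\Big[\Delta t\sum_k\ell(x(t_k),a_k)-\int_0^T\ell(x(t),a(t))\,dt\Big].
\]
\begin{itemize}
\item \emph{First term.} The function $\ell$ is locally Lipschitz, since it is quadratic. RK4 applied to the linear system with piecewise-constant input has global error $O(\Delta t^4)$, and even crude convergence $\max_k\|s_k-x(t_k)\|\to0$ suffices. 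Hence the first term is $O(T\cdot\max_k\|s_k-x(t_k)\|)\to0$.
\item \emph{Second term.} This is a left Riemann sum. The state $x$ is continuous on $[0,T]$ and $a$ is piecewise constant, so the term vanishes as $\Delta t\to0$.
\end{itemize}
Both bounds are uniform over actions bounded by $a_{\max}$, so the convergence passes through the expectation by dominated convergence. Therefore $\Delta t\,\mathbb{E}[\sum_k\ell]\to\mathbb{E}[J]$.

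\textbf{Main obstacle.} The delicate point is Step 2 for a general measurable control. If the policy produces $a(t)$ merely in $L^2$, rather than piecewise constant on the grid, the Riemann-sum term needs an extra density argument. I would handle this by approximating $a$ in $L^2$ with grid-adapted step functions and using continuity of $a\mapsto J(a)$ on $L^2(0,T)$, which follows from the variation-of-constants formula for the linear system. The statement's ``approximates'' suggests that this consistency is all that is required, so no convergence of optimizers is claimed. That stronger claim would need a $\Gamma$-convergence argument, which I would not pursue.
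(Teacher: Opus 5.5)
Your proposal is correct and follows essentially the same route as the paper. The paper likewise uses the exact identity $\sum_k r=-\sum_k\ell$ and strict monotonicity of $x\mapsto -x$ to identify maximizers with minimizers. It then takes a zero-order-hold control, combines the RK4 consistency error with a left Riemann sum to recover $J$, and uses dominated convergence to pass to expectations. Your version is somewhat more careful than the paper's: you check that the expectations are finite and make the error bounds uniform over actions in $[-a_{\max},a_{\max}]$, whereas the paper simply assumes a bounded cost.
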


\begin{proof}
	By definition of the reward, for any state--action pair $(s_k,a_k)$,
	\[
	r(s_k,a_k)=-\ell(s_k,a_k).
	\]
	Since $\gamma=1$, the finite-horizon return associated with a trajectory
	$(s_k,a_k)_{k=0}^{K-1}$ is
	\[
	\sum_{k=0}^{K-1} r(s_k,a_k)= -\sum_{k=0}^{K-1}\ell(s_k,a_k).
	\]
	Taking expectation under the policy $\pi_\theta$ and the trajectory distribution
	induced by the environment dynamics yields
	\[
	\mathbb{E}_{\pi_\theta}\!\left[\sum_{k=0}^{K-1} r(s_k,a_k)\right]
	=
	-\mathbb{E}_{\pi_\theta}\!\left[\sum_{k=0}^{K-1}\ell(s_k,a_k)\right].
	\]
	Since the map $x\mapsto -x$ is strictly monotone, it follows that
	\[
	\arg\max_{\pi_\theta}\;
	\mathbb{E}_{\pi_\theta}\!\left[\sum_{k=0}^{K-1} r(s_k,a_k)\right]
	=
	\arg\min_{\pi_\theta}\;
	\mathbb{E}_{\pi_\theta}\!\left[\sum_{k=0}^{K-1}\ell(s_k,a_k)\right].
	\]
	That is, maximizing return is equivalent to minimizing the accumulated discrete cost
	(the equivalence is exact; in particular, multiplying by any positive constant does
	not change the argmax/argmin).
	
	To relate the discrete cost to the continuous functional, consider a zero-order hold
	control $a_k$ on $[t_k,t_{k+1})$, where $t_k=k\Delta t$. Define
	\[
	J(a)=\int_0^T \big(\mathcal{E}_N(x(t))+\mu a(t)^2\big)\,dt,
	\]
	and its Riemann-sum approximation
	\[
	J_{\Delta t}(a):=\sum_{k=0}^{K-1}\big(\mathcal{E}_N(x(t_k))+\mu a_k^2\big)\,\Delta t.
	\]
	Under consistency of the integrator and
	continuity of $\mathcal{E}_N$, we obtain
	\[
	\sum_{k=0}^{K-1}\ell(s_k,a_k)\,\Delta t
	=
	\sum_{k=0}^{K-1}\big(\mathcal{E}_N(x(t_k))+\mu a_k^2\big)\,\Delta t + O(\Delta^4)
	=
	J_{\Delta t}(a)+O(\Delta^4).
	\]
	Since $J_{\Delta t}(a)$ is a Riemann sum for $J(a)$, it follows that
	$J_{\Delta t}(a)\to J(a)$ as $\Delta t\to0$. Therefore,
	\[
	\sum_{k=0}^{K-1}\ell(s_k,a_k)\,\Delta t \longrightarrow J(a),
	\qquad \Delta t\to0,
	\]
	showing that the discrete cost  approximates the continuous
	functional. Assuming the cost function is bounded and the policy induces measurable trajectory distributions, the expectation of the Riemann sum converges to the expectation of the functional by the Lebesgue Dominated Convergence Theorem \cite{EVANS2010}.
\end{proof}

\section{Proximal Policy Optimization (PPO) algorithm}

In this section, we describe the Proximal Policy Optimization (PPO) algorithm
\cite{Schulman2017PPO} employed to solve the Markov decision process associated
with the reduced-order beam model. In addition to the theoretical formulation, we
explicitly document the implementation choices adopted in the numerical
experiments (actor--critic architecture, GAE, clipping, minibatch training, and
observation normalization), ensuring full consistency between algorithmic
description and computational results.
\subsection{Parameterized stochastic policy}

Consider the MDP defined in the previous section, with states
$s_k=(q_k,\dot q_k)\in\mathbb{R}^{2N}$ and actions $a_k\in\mathbb{R}$.
The policy is modeled as a family of parameterized probability distributions
\[
\pi_\theta(a|s), \qquad \theta\in\mathbb{R}^p,
\]
where $\theta$ denotes the parameters of a neural network.

In this work, we adopt a one-dimensional Gaussian policy,
\begin{equation}\label{eq:policy_gaussian}
	\pi_\theta(a|s)
	=
	\frac{1}{\sqrt{2\pi\sigma_\theta^2}}
	\exp\!\left(
	-\frac{(a-\mu_\theta(s))^2}{2\sigma_\theta^2}
	\right),
\end{equation}
where the mean $\mu_\theta:\mathbb{R}^{2N}\to\mathbb{R}$ is represented by a
multilayer perceptron with two hidden layers of 64 neurons each and hyperbolic
tangent activation functions. The variance $\sigma_\theta>0$  is state-independent and is parameterized via a trainable log-standard deviation, as in \texttt{stable-baselines3}.

More precisely, the policy mean is given by
\begin{equation}
	\mu_\theta(s)
	=
	W_3\,\tanh\!\Big(
	W_2\,\tanh(W_1 s + b_1) + b_2
	\Big) + b_3,
	\label{eq:actor_network}
\end{equation}
where
\[
W_1\in\mathbb{R}^{64\times 2N}, \quad
W_2\in\mathbb{R}^{64\times 64}, \quad
W_3\in\mathbb{R}^{1\times 64},
\]
and $b_1,b_2,b_3$ are bias vectors of compatible dimensions.

The choice of $\tanh$ activation functions ensures smoothness of the policy and
naturally bounds intermediate representations, which is desirable in physical
control applications.


\usetikzlibrary{positioning,arrows.meta}

\begin{figure}[t]
	\centering
	\begin{tikzpicture}[
		>=Latex,
		neuron/.style={circle, draw, thick, minimum size=6mm},
		input/.style={neuron, fill=blue!20},
		hidden/.style={neuron, fill=green!20},
		output/.style={neuron, fill=red!20},
		layerlabel/.style={font=\small},
		connect/.style={->, thick}
		]
		
		\node[input, yshift=-4mm] (q) {$q$};
		\node[input, below=6mm of q, yshift=-4mm] (qdot) {$\dot q$};
		
		\node[layerlabel, above=4mm of q,yshift=-2mm] {Input layer};
		\node[layerlabel, below=2mm of qdot,yshift=-2mm] {$s=(q,\dot q)$};
		
		\node[hidden, right=20mm of q, yshift=5mm] (h11) {};
		\node[hidden, below=4mm of h11] (h12) {};
		\node[hidden, below=12mm of h12] (h13) {};
		\node[hidden, below=4mm of h13] (h14) {};
		
		\foreach \t in {0.25,0.5,0.75} {
			\node[circle, fill=black, inner sep=0pt, minimum size=2.5pt]
			at ($(h12)!{\t}!(h13) + (0mm,0)$) {};
		}
		
		\node[layerlabel, above=4mm of h11] {Hidden layer 1};
		\node[layerlabel, below=2mm of h14] {$64$ neurons, $\tanh$};
		
		\node[hidden, right=20mm of h11, yshift=0mm] (h21) {};
		\node[hidden, below=4mm of h21] (h22) {};
		\node[hidden, below=12mm of h22] (h23) {};
		\node[hidden, below=4mm of h23] (h24) {};
		
		\foreach \t in {0.25,0.5,0.75} {
			\node[circle, fill=black, inner sep=0pt, minimum size=2.5pt]
			at ($(h12)!{\t}!(h13) + (26mm,0)$) {};
		}
		\node[layerlabel, above=4mm of h21] {Hidden layer 2};
		\node[layerlabel, below=2mm of h24] {$64$ neurons, $\tanh$};
		
		\node[output, right=22mm of h21, yshift=-16mm] (out) {$\mu_\theta(s)$};
		
		\node[layerlabel, above=4mm of out] {Output layer};
		\node[layerlabel, below=2mm of out] {Action mean};
		
		\foreach \h in {h11,h12,h13,h14} {
			\draw[connect] (q) -- (\h);
			\draw[connect] (qdot) -- (\h);
		}
		
		\foreach \hA in {h11,h12,h13,h14} {
			\foreach \hB in {h21,h22,h23,h24} {
				\draw[connect] (\hA) -- (\hB);
			}
		}
		
		\foreach \h in {h21,h22,h23,h24} {
			\draw[connect] (\h) -- (out);
		}
		
	\end{tikzpicture}
	
	\caption{Didactic illustration of the multilayer perceptron (MLP) used to parametrize the PPO policy.
		Only a few neurons per layer are shown for clarity. The actual network has two hidden layers with
		64 neurons each and $\tanh$ activation functions.}
	\label{fig:mlp_didactic}
\end{figure}
The Gaussian policy is commonly adopted in continuous-control problems,
as it ensures differentiability of the log-likelihood with respect to the
policy parameters. This property is essential for policy-gradient methods,
since the gradient of the objective involves $\nabla_\theta \log
\pi_\theta(a|s)$. Moreover, the stochasticity of the Gaussian distribution
facilitates exploration of the action space during training
\cite{Schulman2017PPO,SuttonBarto2018}.



\subsection{Value function and advantage function}

The value function associated with a policy $\pi_\theta$ is defined by
\begin{equation}
	V^{\pi_\theta}(s)
	=
	\mathbb{E}_{\pi_\theta}
	\left[
	\sum_{k=0}^{K-1}\gamma^k\,r(s_k,a_k)
	\;\middle|\;
	s_0=s
	\right],
\end{equation}
where $\gamma\in(0,1]$ is the discount factor.

In PPO, the value function is approximated by a neural network
$V_\phi:\mathbb{R}^{2N}\to\mathbb{R}$, called the \emph{critic}. The critic shares
the same architecture as the actor, namely two hidden layers of 64 neurons with
$\tanh$ activation functions, but has an independent set of parameters $\phi$. In our implementation, the critic is trained jointly with the actor through a squared-error
regression loss against empirical returns computed from rollouts, using the same minibatch
sampling and multiple-epoch updates as the actor.

\begin{Lemma}[Baseline invariance]
	\label{lem:baseline_invariance}
	Let $b:\mathcal{S}\to\mathbb{R}$ be measurable and independent of $a$. If
	$\pi_\theta(\cdot\mid s)$ is differentiable and $\nabla_\theta\pi_\theta(\cdot\mid s)$
	is integrable, then for each fixed $s$,
	\[
	\mathbb{E}_{a\sim\pi_\theta(\cdot\mid s)}\!\left[\nabla_\theta\log\pi_\theta(a\mid s)\,b(s)\right]=0.
	\]
\end{Lemma}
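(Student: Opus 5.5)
The plan is the standard score-function argument: pull $b(s)$ out of the expectation, rewrite the score as the gradient of the density, and then swap gradient and integral, using that $\pi_\theta(\cdot\mid s)$ integrates to one for every $\theta$. Fix $s\in\mathcal{S}$. Since $b$ does not depend on $a$, the factor $b(s)$ is a constant with respect to the integration variable. Hence
\[
\mathbb{E}_{a\sim\pi_\theta(\cdot\mid s)}\!\left[\nabla_\theta\log\pi_\theta(a\mid s)\,b(s)\right]
= b(s)\int_{\mathcal{A}}\pi_\theta(a\mid s)\,\nabla_\theta\log\pi_\theta(a\mid s)\,da .
\]
It therefore suffices to show that the integral on the right vanishes.

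\textbf{Step 1 (log-derivative trick).} On the set where $\pi_\theta(a\mid s)>0$ we have $\pi_\theta\nabla_\theta\log\pi_\theta=\nabla_\theta\pi_\theta$. For the Gaussian policy \eqref{eq:policy_gaussian} this set is all of $\mathcal{A}$. In general, the complement contributes nothing, because there the integrand of the expectation is weighted by zero density. The integral thus equals $\int_{\mathcal{A}}\nabla_\theta\pi_\theta(a\mid s)\,da$.

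\textbf{Step 2 (interchange of derivative and integral).} We want
\[
\int_{\mathcal{A}}\nabla_\theta\pi_\theta(a\mid s)\,da=\nabla_\theta\int_{\mathcal{A}}\pi_\theta(a\mid s)\,da=\nabla_\theta 1=0 ,
\]
where the middle equality uses that $\pi_\theta(\cdot\mid s)$ is a probability density for every $\theta$.

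\textbf{Main obstacle.} The only delicate point is justifying the interchange in Step 2. Integrability of $\nabla_\theta\pi_\theta(\cdot\mid s)$ alone is not quite enough. I would invoke differentiation under the integral sign via dominated convergence: assume, or read into the hypothesis, that $\nabla_\theta\pi_\theta(\cdot\mid s)$ is dominated by an integrable function locally uniformly in $\theta$. Then the mean value theorem bounds the difference quotients uniformly, and dominated convergence lets the limit pass inside the integral.

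For the Gaussian policy actually used, this domination is explicit. The derivatives with respect to $\mu_\theta$ and $\log\sigma_\theta$ are polynomial in $a$ times a Gaussian. The parameters enter smoothly through the $\tanh$ network \eqref{eq:actor_network}, so on a neighborhood of $\theta$ the quantities $\mu_\theta(s)$ and $\sigma_\theta$ stay in a compact set with $\sigma_\theta$ bounded away from $0$. Hence one Gaussian-type envelope dominates all the difference quotients there.

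If the action is clipped to $[-a_{\max},a_{\max}]$, the same computation works, provided the expectation is read with respect to the density being differentiated.
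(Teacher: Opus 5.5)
Your proposal is correct and follows the same score-function argument as the paper: factor out $b(s)$, use $\pi_\theta\nabla_\theta\log\pi_\theta=\nabla_\theta\pi_\theta$, swap $\nabla_\theta$ with $\int\,da$, and conclude from $\nabla_\theta 1=0$. You are right that integrability of $\nabla_\theta\pi_\theta(\cdot\mid s)$ alone does not justify the swap, and the locally uniform integrable envelope you supply (explicit for the Gaussian policy) fills a step the paper's one-line proof leaves unjustified; to close Step 1 in general, note also that $\nabla_\theta\pi_\theta=0$ on $\{\pi_\theta=0\}$, since those points are minima of a nonnegative differentiable function.
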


\begin{proof}
	Compute
	\[
	\mathbb{E}\big[\nabla_\theta\log\pi_\theta(a\mid s)\big]
	=
	\int \nabla_\theta\log\pi_\theta(a\mid s)\,\pi_\theta(a\mid s)\,da
	=
	\int \nabla_\theta\pi_\theta(a\mid s)\,da
	=
	\nabla_\theta\int \pi_\theta(a\mid s)\,da
	=
	\nabla_\theta 1
	=
	0,
	\]
	and multiply by $b(s)$.
\end{proof}

\subsection{Advantage function and generalized advantage estimation}

The action--value function is defined by
\[
Q^{\pi_\theta}(s,a)
=
\mathbb{E}_{\pi_\theta}
\left[
\sum_{k=0}^{K-1}
\gamma^k\,r(s_k,a_k)
\;\middle|\;
s_0=s,\; a_0=a
\right].
\]

\begin{Definition}[Advantage function {\cite{SuttonBarto2018}}]
	The advantage function is given by
	\[
	A^{\pi_\theta}(s,a)
	=
	Q^{\pi_\theta}(s,a)
	-
	V^{\pi_\theta}(s),
	\]
	and measures the relative benefit of selecting action $a$ in state $s$.
\end{Definition}

The advantage function measures the relative gain of taking action $a$ in state
$s$. In practice, $V^{\pi_\theta}$ is approximated by a parametric function
$V_\phi$, called the \emph{critic}, as in the actor--critic architecture
\cite{KondaTsitsiklis2000}.

\begin{Theorem}[Policy Gradient Theorem (trajectory-density form) {\cite{SuttonBarto2018}}]
	\label{thm:policy_gradient}
	Assume $P$ does not depend on $\theta$, and $\pi_\theta$ is differentiable with sufficient
	integrability to interchange gradient and expectation. Then
	\[
	\nabla_\theta J(\theta)
	=
	\mathbb{E}_{\pi_\theta}\big[\nabla_\theta\log\pi_\theta(a_k\mid s_k)\,Q^{\pi_\theta}(s_k,a_k)\big].
	\]
	Moreover, the advantage form holds:
	\[
	\nabla_\theta J(\theta)
	=
	\mathbb{E}_{\pi_\theta}\big[\nabla_\theta\log\pi_\theta(a_k\mid s_k)\,A^{\pi_\theta}(s_k,a_k)\big].
	\]
\end{Theorem}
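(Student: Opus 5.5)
The approach is the standard likelihood-ratio (trajectory-density) argument on the finite horizon $K$, followed by a causality argument and an application of Lemma~\ref{lem:baseline_invariance}.

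\textbf{Step 1: trajectory density.} A trajectory $\tau=(s_0,a_0,\dots,s_{K-1},a_{K-1})$ has density
\[
p_\theta(\tau)=P(s_0)\prod_{k=0}^{K-1}\pi_\theta(a_k\mid s_k)\,P(s_{k+1}\mid s_k,a_k),
\]
and $J(\theta)=\int p_\theta(\tau)R(\tau)\,d\tau$ with $R(\tau)=\sum_k\gamma^k r(s_k,a_k)$. Here the transition kernel is the Dirac $\delta_{F_\Delta(s_k,a_k)}$, so it is cleaner to integrate only over actions and view $s_{k+1}$ as a deterministic function of the past. Either way, the kernel does not depend on $\theta$.

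\textbf{Step 2: log-derivative trick.} By the assumed integrability we may interchange $\nabla_\theta$ and the integral. Using $\nabla_\theta p_\theta=p_\theta\nabla_\theta\log p_\theta$, and the fact that $\log P(s_0)$ and $\log P(\cdot\mid\cdot)$ have zero $\theta$-gradient, we get
\[
\nabla_\theta J=\mathbb{E}_{\pi_\theta}\Big[\sum_{k}\nabla_\theta\log\pi_\theta(a_k\mid s_k)\,R(\tau)\Big].
\]

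\textbf{Step 3: causality.} Split $R(\tau)$ into the rewards before step $k$ and the rewards from step $k$ onward. For the past rewards, condition on $(s_0,a_0,\dots,s_k)$. Those rewards are then constant, and Lemma~\ref{lem:baseline_invariance} (with $b$ taken as that constant) kills their contribution. For the future rewards, condition on $(s_k,a_k)$ and use the Markov property. The conditional expectation of the reward-to-go is then $\gamma^k Q^{\pi_\theta}(s_k,a_k)$, where $Q$ is understood on the remaining horizon. This gives the $Q$-form, with the sum over $k$ and the factor $\gamma^k$ absorbed into the expectation over the discounted state–action visitation, as in the statement's shorthand.

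\textbf{Step 4: advantage form.} Subtract $V^{\pi_\theta}(s_k)$, which does not depend on $a_k$. By Lemma~\ref{lem:baseline_invariance} applied conditionally on $s_k$, we have $\mathbb{E}[\nabla_\theta\log\pi_\theta(a_k\mid s_k)V^{\pi_\theta}(s_k)]=0$. Hence $Q$ may be replaced by $A^{\pi_\theta}=Q^{\pi_\theta}-V^{\pi_\theta}$.

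The main obstacle is rigor in Steps 2–3 rather than algebra. Two points need care: justifying the interchange, and handling the Dirac transitions.

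For the interchange, the Gaussian policy \eqref{eq:policy_gaussian} has score $\nabla_\theta\log\pi_\theta=\big((a-\mu_\theta)/\sigma_\theta^2\big)\nabla_\theta\mu_\theta+\dots$, which is polynomial in $a$. Rewards grow quadratically in the state. Under Assumption~\ref{ass:lipschitz_both}, states grow at most polynomially in the sampled actions over a finite horizon. Gaussian moments of all orders exist, so dominated convergence applies. I would state this as the precise meaning of ``sufficient integrability.''

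For the Dirac transitions, one works directly with the pushforward of the product of Gaussian action densities. The transitions contribute no $\theta$-dependence because $F_\Delta$ does not depend on $\theta$.
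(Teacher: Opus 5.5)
Your proposal is correct and follows the same route as the paper's proof. The likelihood-ratio derivation on the trajectory density gives the $Q$-form, and Lemma~\ref{lem:baseline_invariance} with baseline $V^{\pi_\theta}(s_k)$ gives the advantage form; your causality step and remaining-horizon reading of $Q^{\pi_\theta}$ only make explicit what the paper leaves to the citation.

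One small correction to your integrability remark: Assumption~\ref{ass:lipschitz_both} does not give the polynomial growth of the states in the sampled Gaussian noise, because local Lipschitz continuity on bounded sets gives no global growth bound. The growth bound comes instead from $F_\Delta$ being linear (RK4 applied to the linear system \eqref{eq:ss_cont}) and $\mu_\theta$ being bounded.
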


\begin{proof}
	The first identity follows from the likelihood-ratio (score-function) derivation on the
	trajectory density induced by $\pi_\theta$ (standard in \cite{SuttonBarto2018}).
	For the second identity, use Lemma~\ref{lem:baseline_invariance} with baseline
	$b(s)=V^{\pi_\theta}(s)$ to replace $Q^{\pi_\theta}$ by $Q^{\pi_\theta}-V^{\pi_\theta}$,
	which equals $A^{\pi_\theta}$.
\end{proof}

To reduce the variance of the policy-gradient estimator, we use generalized
advantage estimation (GAE), proposed in \cite{Schulman2016GAE}.
Defining the temporal-difference error
\subsection{Generalized Advantage Estimation (GAE)}
\label{subsec:gae}

Since $Q^{\pi_\theta}$ and $V^{\pi_\theta}$ are unknown, we estimate advantages from rollouts.
We use generalized advantage estimation (GAE) \cite{Schulman2016GAE}. Define the TD-error
\begin{equation}\label{eq:td_error}
	\delta_k=r_k+\gamma V_\phi(s_{k+1})-V_\phi(s_k),
\end{equation}
and the GAE estimator
\begin{equation}\label{eq:gae}
	\widehat A_k=\sum_{l=0}^{K-k-1}(\gamma\lambda)^l\,\delta_{k+l},\qquad \lambda\in[0,1].
\end{equation}
We set $\gamma=0.99$ and $\lambda=0.95$ in all experiments. The critic regression
targets are computed as
\begin{equation}\label{eq:return_target}
	\widehat V_k=\widehat A_k+V_\phi(s_k).
\end{equation}

\subsection{Clipped surrogate objective and trust-region viewpoint}
\label{subsec:ppo_clip_trust}

Large policy updates may degrade performance; trust-region methods control policy drift,
typically via KL constraints. In TRPO-type formulations one considers
\begin{equation}\label{eq:trpo_constrained}
	\max_{\theta}\ \mathbb{E}\big[r_k(\theta)\,\widehat A_k\big]
	\quad \text{s.t.}\quad
	\mathbb{E}\big[D_{\mathrm{KL}}(\pi_{\theta_{\mathrm{old}}}(\cdot\mid s_k)\,\|\,\pi_{\theta}(\cdot\mid s_k))\big]\le \delta,
\end{equation}
where the likelihood ratio is
\begin{equation}\label{eq:ratio}
	r_k(\theta)=\frac{\pi_\theta(a_k\mid s_k)}{\pi_{\theta_{\mathrm{old}}}(a_k\mid s_k)}.
\end{equation}
PPO replaces the constrained problem \eqref{eq:trpo_constrained} by the clipped surrogate
\cite{Schulman2017PPO}
\begin{equation}\label{eq:ppo_clip}
	L^{\mathrm{CLIP}}(\theta)
	=
	\mathbb{E}
	\left[
	\min\Big(
	r_k(\theta)\widehat A_k,\;
	\mathrm{clip}(r_k(\theta),1-\varepsilon,1+\varepsilon)\widehat A_k
	\Big)
	\right],
\end{equation}
with $\varepsilon>0$.

The clipping operator restricts the probability ratio to the interval
$[1-\varepsilon,1+\varepsilon]$, preventing excessively large policy updates.
This mechanism stabilizes training and allows multiple epochs of minibatch
updates on the same rollout.
\subsection{Actor--critic loss and optimization details}
\label{subsec:ppo_loss_updates}

Actor and critic are trained jointly by minimizing
\begin{equation}\label{eq:ppo_total_loss}
	\mathcal{L}(\theta,\phi)
	=
	- L^{\mathrm{CLIP}}(\theta)
	+
	c_v\,\mathbb{E}\!\left[\big(V_\phi(s_k)-\widehat V_k\big)^2\right]
	-
	c_e\,\mathbb{E}\!\left[\mathcal{H}(\pi_\theta(\cdot\mid s_k))\right],
\end{equation}
where $c_v>0$ weights the critic regression term, $c_e\ge 0$ weights the entropy bonus, and
$\mathcal{H}(\pi(\cdot\mid s))=-\int \pi(a\mid s)\log\pi(a\mid s)\,da$.
Optimization is carried out with ADAM using minibatch SGD and multiple epochs per rollout,
and gradient norms are clipped to improve numerical stability.


\subsection{Algorithmic template and reproducibility notes}
\label{subsec:ppo_algorithm_template}

\begin{algorithm}[H]
	\caption{PPO (actor--critic with GAE and clipping) for beam control}
	\label{alg:ppo}
	\begin{algorithmic}[1]
		\STATE Initialize actor parameters $\theta$ and critic parameters $\phi$.
		\STATE Set $\theta_{\mathrm{old}}\leftarrow \theta$.
		\FOR{each training iteration}
		\STATE Collect rollout $\{(s_k,a_k,r_k)\}_{k=0}^{n_{\mathrm{steps}}-1}$ with $a_k\sim\pi_{\theta_{\mathrm{old}}}(\cdot\mid s_k)$.
		\STATE Compute $V_\phi(s_k)$ and TD-errors $\delta_k=r_k+\gamma V_\phi(s_{k+1})-V_\phi(s_k)$.
		\STATE Compute $\widehat A_k$ via \eqref{eq:gae} and returns $\widehat V_k=\widehat A_k+V_\phi(s_k)$.
		\FOR{$1,\dots,n_{\mathrm{epochs}}$}
		\STATE Shuffle and split rollout into minibatches.
		\FOR{each minibatch}
		\STATE Compute ratios $r_k(\theta)=\pi_\theta(a_k\mid s_k)/\pi_{\theta_{\mathrm{old}}}(a_k\mid s_k)$.
		\STATE Evaluate $L^{\mathrm{CLIP}}(\theta)$ via \eqref{eq:ppo_clip}.
		\STATE Update $(\theta,\phi)$ by minimizing \eqref{eq:ppo_total_loss} with ADAM.
		\STATE Apply gradient-norm clipping.
		\ENDFOR
		\ENDFOR
		\STATE $\theta_{\mathrm{old}}\leftarrow \theta$.
		\ENDFOR
	\end{algorithmic}
\end{algorithm}

\textbf{Observation normalization.}
Observations are normalized online using running mean/variance estimates
(\texttt{VecNormalize}). This improves conditioning across modal coordinates and velocities.

\textbf{Deterministic evaluation.}
Reported closed-loop results use the mean action $a=\mu_\theta(s)$,
yielding a deterministic nonlinear feedback law. Since the map
\(s \mapsto \mu_\theta(s)\) is a finite composition of affine maps
and the smooth activation function \(\tanh\), it is \(C^\infty\),
and therefore locally Lipschitz on bounded sets. Consequently,
Assumption~\ref{ass:lipschitz_both} is satisfied by the deterministic
evaluation policy used in the closed-loop analysis.

\section{Energy analysis and closed-loop stability}
\label{sec:energy_stability}

In this section, we analyze the energy structure of the reduced-order model and
the closed-loop behavior induced by different feedback strategies. We first
establish energy conservation in the uncontrolled case, then derive
dissipation mechanisms and structural limitations under bounded actuation,
and finally interpret the PPO controller within the same energetic framework.
These results provide the theoretical basis for the numerical comparisons
reported later.

\subsection{ROM modal energy and conservation (open loop)}

Consider the reduced-order model
\begin{equation}
	\ddot q(t)+\Lambda q(t)=b\,a(t),
	\qquad
	q(t)\in\mathbb{R}^N,\quad
	\Lambda=\mathrm{diag}(\lambda_1,\dots,\lambda_N),\quad
	b\in\mathbb{R}^N,
	\label{eq:rom_vec}
\end{equation}
where \(a(t)=\ddot u(t)\) is the control action. We define the reduced modal energy
\begin{equation}
	\mathcal{E}_N(q,\dot q)
	=
	\frac12\left(\|\dot q\|^2+q^\top\Lambda q\right).
	\label{eq:EN}
\end{equation}
Differentiating \eqref{eq:EN} along solutions of \eqref{eq:rom_vec} gives
\begin{align}
	\frac{d}{dt}\mathcal{E}_N(q,\dot q)
	&=
	\dot q^\top \ddot q+\dot q^\top\Lambda q
	=
	\dot q^\top(\ddot q+\Lambda q)
	=
	\dot q^\top b\,a(t).
	\label{eq:ENdot_open}
\end{align}
In particular, if \(a(t)\equiv 0\), then \eqref{eq:ENdot_open} yields
\[
\frac{d}{dt}\mathcal{E}_N(q,\dot q)=0,
\]
so that the reduced system is conservative in open loop.

\subsection{Dissipation mechanisms and structural limitations}

To introduce dissipation, consider the linear velocity feedback law
\begin{equation}
	a(t)=-K\,\dot q(t),
	\qquad
	K\in\mathbb{R}^{1\times N}.
	\label{eq:vel_feedback}
\end{equation}
Substituting \eqref{eq:vel_feedback} into \eqref{eq:ENdot_open}, we obtain
\begin{equation}
	\dot{\mathcal{E}}_N(q,\dot q)
	=
	-\dot q^\top (bK)\,\dot q.
	\label{eq:ENdot_closed_lin}
\end{equation}

\begin{Proposition}[Dissipation along the actuated velocity channel]
	\label{prop:colocated_damping}
	Consider the closed-loop feedback
	\[
	a(t)=-k\,b^\top \dot q(t),
	\qquad k\ge 0,
	\]
	where \(b\in\mathbb{R}^N\) is the input vector in \eqref{eq:rom_vec}.
	Then the modal energy satisfies
	\[
	\dot{\mathcal{E}}_N(t)
	=
	-k\,(b^\top \dot q(t))^2
	\le 0.
	\]
	In particular,
	\[
	\dot{\mathcal{E}}_N(t)=0
	\quad\Longleftrightarrow\quad
	b^\top \dot q(t)=0.
	\]
	Hence energy dissipation occurs exclusively through the scalar actuated
	velocity channel \(b^\top\dot q(t)\).
\end{Proposition}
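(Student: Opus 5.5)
The proof is a direct substitution into the open-loop energy identity \eqref{eq:ENdot_open}. Along any (classical) solution of \eqref{eq:rom_vec} we already have $\dot{\mathcal{E}}_N=\dot q^\top b\,a(t)$. This holds for an arbitrary input $a$, since it only uses $\ddot q+\Lambda q=ba$ and the symmetry of $\Lambda$.

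We plug in the feedback $a(t)=-k\,b^\top\dot q(t)$. Since $\dot q^\top b=b^\top\dot q$ is a scalar, this gives
\[
\dot{\mathcal{E}}_N(t)=(b^\top\dot q(t))\bigl(-k\,b^\top\dot q(t)\bigr)=-k\,(b^\top\dot q(t))^2 .
\]
Nonpositivity follows from $k\ge0$ and the square being nonnegative. Equivalently, this is \eqref{eq:ENdot_closed_lin} with $K=k\,b^\top$, so that $bK=k\,bb^\top$ is positive semidefinite of rank at most one.

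Before this, I would briefly justify that the closed loop has solutions along which the computation is legitimate. The closed-loop system $\ddot q+\Lambda q+k\,bb^\top\dot q=0$ is linear with constant coefficients. Hence it has a unique global $C^\infty$ solution for any initial data, and differentiating $\mathcal{E}_N$ is valid.

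For the characterization of zero dissipation, the direction ($\Leftarrow$) is immediate from the formula. The direction ($\Rightarrow$) needs $k>0$: then $-k(b^\top\dot q)^2=0$ forces $b^\top\dot q=0$. This is the only delicate point. For $k=0$ the energy derivative vanishes identically regardless of $b^\top\dot q$, so the stated equivalence fails. I would therefore state explicitly that the ``$\Longleftrightarrow$'' is understood for $k>0$, with the trivial conservative case $k=0$ recovering the open-loop result.

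The closing interpretive sentence then follows directly. The rate of energy change depends on the state only through the scalar $v=b^\top\dot q$. Consequently, components of $\dot q$ in $b^\perp$ produce no instantaneous dissipation.
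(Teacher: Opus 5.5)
Your proof is correct and takes the paper's route: substitute $a=-k\,b^\top\dot q$ into the identity $\dot{\mathcal{E}}_N=a\,b^\top\dot q$ from \eqref{eq:ENdot_open}. Your caveat is also right: the forward implication $\dot{\mathcal{E}}_N=0\Rightarrow b^\top\dot q=0$ needs $k>0$, and the paper's proof overlooks this when it asserts the equivalence for all $k\ge 0$.
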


\begin{proof}
	From the energy identity \eqref{eq:ENdot_open},
	\[
	\dot{\mathcal{E}}_N(t)=a(t)\,b^\top\dot q(t).
	\]
	Substituting the feedback law gives
	\[
	\dot{\mathcal{E}}_N(t)
	=
	(-k\,b^\top\dot q(t))\,b^\top\dot q(t)
	=
	-k\,(b^\top\dot q(t))^2.
	\]
	Since \(k\ge 0\), the right-hand side is non-positive, and
	\[
	\dot{\mathcal{E}}_N(t)=0
	\quad\Longleftrightarrow\quad
	(b^\top\dot q(t))^2=0
	\quad\Longleftrightarrow\quad
	b^\top\dot q(t)=0.
	\]
\end{proof}

\begin{remark}
	The feedback \eqref{eq:vel_feedback} generates dissipation only through the
	scalar velocity channel \(b^\top\dot q(t)\). Consequently, damping acts
	solely along the actuated modal direction \(\mathrm{span}\{b\}\). Since the
	control input is scalar, it cannot independently damp all modal velocity
	components; the remaining components may decay only through the intrinsic
	modal coupling of the system dynamics. This structure provides a natural
	energetic baseline against which more general nonlinear or learning-based
	feedback laws can be interpreted.
\end{remark}

\begin{Proposition}[Fundamental dissipation bound under saturated SISO actuation]
	\label{prop:dissipation_bound}
	Consider the reduced-order model \eqref{eq:rom_vec} with
	\(\Lambda\succ 0\) and \(b\in\mathbb{R}^N\). Assume that the control is bounded,
	\[
	|a(t)|\le a_{\max}.
	\]
	Then, along any trajectory generated by any admissible feedback law, the
	modal energy satisfies
	\[
	\frac{d}{dt}\mathcal{E}_N(t)
	\ge
	-a_{\max}\,\|b\|\,\sqrt{2\,\mathcal{E}_N(t)}.
	\]
	Consequently, for any \(\rho\in(0,1)\), the time required to reach
	\[
	\mathcal{E}_N(t_\rho)=\rho\,\mathcal{E}_N(0)
	\]
	is bounded below by
	\[
	t_\rho
	\ge
	\frac{\sqrt{2}}{a_{\max}\|b\|}
	\Big(
	\sqrt{\mathcal{E}_N(0)}
	-
	\sqrt{\rho\,\mathcal{E}_N(0)}
	\Big).
	\]
\end{Proposition}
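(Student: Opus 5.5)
The argument starts from the open-loop energy identity \eqref{eq:ENdot_open}, $\dot{\mathcal{E}}_N=a(t)\,b^\top\dot q(t)$, which holds along any trajectory of \eqref{eq:rom_vec}. The feedback law enters only through the value of $a(t)$, so the argument is uniform over all admissible feedbacks. The plan is to bound the right-hand side from below by $-|a(t)|\,|b^\top\dot q|$. Cauchy--Schwarz then gives $\ge -a_{\max}\|b\|\,\|\dot q\|$. Finally, since $\Lambda\succ0$ we have $q^\top\Lambda q\ge0$, so $\|\dot q\|^2\le 2\mathcal{E}_N$ and hence $\|\dot q\|\le\sqrt{2\mathcal{E}_N}$. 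Chaining these three inequalities yields the differential inequality $\dot{\mathcal{E}}_N\ge -a_{\max}\|b\|\sqrt{2\mathcal{E}_N}$.

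For the time estimate I will work with $\sqrt{\mathcal{E}_N}$. Wherever $\mathcal{E}_N(t)>0$, the function $t\mapsto\sqrt{\mathcal{E}_N(t)}$ is absolutely continuous; I assume $a\in L^\infty$, so trajectories are absolutely continuous and $\mathcal{E}_N$ is as well. Its derivative satisfies
\[
\frac{d}{dt}\sqrt{\mathcal{E}_N}=\frac{\dot{\mathcal{E}}_N}{2\sqrt{\mathcal{E}_N}}\ge -\frac{a_{\max}\|b\|}{\sqrt2}.
\]
Take $\mathcal{E}_N(0)>0$; otherwise the claim is trivial. On $[0,t_\rho]$ we have $\mathcal{E}_N\ge\rho\,\mathcal{E}_N(0)>0$ if $t_\rho$ is chosen as the first time the level $\rho\,\mathcal{E}_N(0)$ is reached. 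Integrating over this interval gives
\[
\sqrt{\rho\,\mathcal{E}_N(0)}-\sqrt{\mathcal{E}_N(0)}\ge -\frac{a_{\max}\|b\|}{\sqrt2}\,t_\rho .
\]
Rearranging, and using $\sqrt2/(a_{\max}\|b\|)$ as the reciprocal factor, gives the stated lower bound. If the level is reached at a later time rather than the first one, the bound holds a fortiori, since that time is even larger.

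The main obstacle is only technical: the square root is not differentiable where $\mathcal{E}_N=0$. Restricting to the first hitting time, where the energy stays bounded away from zero, avoids this. As an alternative, I could apply a comparison argument to the differential inequality for $\mathcal{E}_N$ directly, or differentiate $\sqrt{\mathcal{E}_N+\epsilon}$ and let $\epsilon\to0$. Two degenerate cases need a brief remark: when $b=0$ or $a_{\max}=0$, the energy is conserved and the level $\rho\,\mathcal{E}_N(0)$ is never reached.
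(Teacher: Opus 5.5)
Your proof is correct and follows essentially the same route as the paper. Both arguments use the power identity $\dot{\mathcal{E}}_N=a\,b^\top\dot q$, then Cauchy--Schwarz with the saturation bound, then $\|\dot q\|^2\le 2\mathcal{E}_N$, and finally integrate the resulting lower bound on $\frac{d}{dt}\sqrt{\mathcal{E}_N}$. Your restriction to the first hitting time, which keeps $\mathcal{E}_N$ bounded away from zero so the square root is differentiable, together with your absolute-continuity remarks, makes the integration step more rigorous than the paper's one-line ``integration gives the stated lower bound.''
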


\begin{proof}
	Differentiating the energy along solutions yields
	\[
	\dot{\mathcal{E}}_N(t)
	=
	\langle \dot q(t),\ddot q(t)+\Lambda q(t)\rangle
	=
	a(t)\,\langle b,\dot q(t)\rangle.
	\]
	By the Cauchy--Schwarz inequality,
	\[
	|\langle b,\dot q\rangle|
	\le
	\|b\|\,\|\dot q\|.
	\]
	Using the saturation constraint,
	\[
	\dot{\mathcal{E}}_N(t)
	\ge
	-a_{\max}\,\|b\|\,\|\dot q(t)\|.
	\]
	Since \(\|\dot q(t)\|^2\le 2\,\mathcal{E}_N(t)\), we obtain
	\[
	\dot{\mathcal{E}}_N(t)
	\ge
	-a_{\max}\,\|b\|\,\sqrt{2\,\mathcal{E}_N(t)}.
	\]
	Let \(x(t)=\sqrt{\mathcal{E}_N(t)}\). Then
	\[
	\dot x(t)
	=
	\frac{\dot{\mathcal{E}}_N(t)}{2\sqrt{\mathcal{E}_N(t)}}
	\ge
	-\frac{a_{\max}\|b\|}{\sqrt{2}}.
	\]
	Integration gives the stated lower bound for \(t_\rho\).
\end{proof}

\begin{corollary}[Structural degradation with increasing modal dimension]
	\label{cor:modal_degradation}
	Assume that the modal coupling coefficients \(b_n\) decay with the mode index,
	i.e.
	\[
	|b_n|\le C\,n^{-p},
	\qquad p>0.
	\]
	Consider initial data concentrated in a single high mode \(j\). Then the
	minimal time required to dissipate a fixed fraction of modal energy satisfies
	the lower bound
	\[
	t_\rho^{(j)}
	\gtrsim
	\frac{1}{a_{\max}|b_j|}
	\sim
	j^p.
	\]
	In particular, if \(b_j\) decays polynomially, the achievable dissipation rate
	necessarily deteriorates with increasing modal index.
\end{corollary}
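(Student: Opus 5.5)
We derive the corollary from Proposition~\ref{prop:dissipation_bound}, but with a refined, mode-localized version of its differential inequality. The crude bound there uses $\|b\|$, which does not see the mode index. We therefore first exploit that the initial data are concentrated in mode $j$.

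\textbf{Step 1 (split the energy).} Write
\[
\mathcal{E}_N=\mathcal{E}^{(j)}+\mathcal{E}^{(\perp)},\qquad \mathcal{E}^{(j)}=\tfrac12(\dot q_j^2+\lambda_j q_j^2).
\]
Since $\Lambda$ is diagonal, the modes in \eqref{eq:rom_vec} are coupled only through the common input $a(t)$. Hence, exactly as in \eqref{eq:ENdot_open},
\[
\frac{d}{dt}\mathcal{E}^{(j)}=b_j\,\dot q_j\,a(t).
\]
Repeating the Cauchy--Schwarz and saturation argument of Proposition~\ref{prop:dissipation_bound} on this single mode gives
\[
\frac{d}{dt}\mathcal{E}^{(j)}\ge -a_{\max}|b_j|\sqrt{2\mathcal{E}^{(j)}}.
\]
Setting $x=\sqrt{\mathcal{E}^{(j)}}$ yields $\dot x\ge -a_{\max}|b_j|/\sqrt2$. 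The fraction of energy to be removed is fixed, so integration gives
\[
t_\rho^{(j)}\ge \frac{\sqrt2\,(1-\sqrt\rho)\sqrt{\mathcal{E}^{(j)}(0)}}{a_{\max}|b_j|}.
\]

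\textbf{Step 2 (interpret ``fraction'').} The target must be stated as driving the energy of mode $j$ to $\rho\,\mathcal{E}^{(j)}(0)$, or else the argument must show that the total energy cannot drop faster. The concern is that energy might leak from mode $j$ into the other modes without being dissipated. This does not matter for the total: $\mathcal{E}^{(\perp)}\ge0$, so $\mathcal{E}_N\ge\mathcal{E}^{(j)}$, and $\mathcal{E}_N(0)=\mathcal{E}^{(j)}(0)$ for concentrated data. Therefore, if $\mathcal{E}_N(t)\le\rho\,\mathcal{E}_N(0)$, then $\mathcal{E}^{(j)}(t)\le\rho\,\mathcal{E}^{(j)}(0)$. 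Any time at which the total energy reaches the threshold is thus already bounded below by the Step~1 estimate.

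\textbf{Step 3 (asymptotics).} Fix the initial energy level and $\rho$, and insert $|b_j|\le Cj^{-p}$:
\[
t_\rho^{(j)}\ge \frac{c}{a_{\max}|b_j|}\ge \frac{c}{C\,a_{\max}}\,j^{p}.
\]
This gives the $\gtrsim$ statement. The symbol $\sim$ in the corollary should be read as this lower growth rate $j^p$, since only an upper bound on $|b_j|$ is assumed. The main obstacle is Step~1: identifying that the per-mode energy identity holds despite the shared scalar input, which is what removes $\|b\|$ in favour of $|b_j|$. Once that is in place, the rest is routine.
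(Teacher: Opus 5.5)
Your proof is correct. The paper gives no separate argument for this corollary and presents it as a direct consequence of Proposition~\ref{prop:dissipation_bound}. You use the same mechanism as that proposition: the power identity, Cauchy--Schwarz with the saturation bound, and integration of $\sqrt{\mathcal{E}}$. However, your localization to mode $j$ is needed, not cosmetic. Applied literally to the $N$-mode system, Proposition~\ref{prop:dissipation_bound} puts $\|b\|\ge|b_1|$ in the denominator, and that bound does not grow with $j$ at all. Your per-mode identity $\frac{d}{dt}\mathcal{E}^{(j)}=b_j\,\dot q_j\,a$ replaces $\|b\|$ by $|b_j|$; equivalently, you apply the proposition to the scalar subsystem $\ddot q_j+\lambda_j q_j=b_j a$. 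Your Step~2 comparison $\mathcal{E}_N\ge\mathcal{E}^{(j)}$, with equality at $t=0$, then transfers the bound to the total modal energy, even though the control excites the other modes. The paper leaves both points implicit. Your reading of $\sim$ as a one-sided statement is also right under the stated hypothesis $|b_n|\le C n^{-p}$. For this particular model the two-sided version does hold. From the paper's formulas for $\gamma_n$ and $b_n$, the factor $1+\alpha k_j^2$ cancels and $b_j=4/(Lk_j^3)$ for odd $j$, so $p=3$. For even $j$ one gets $b_j=0$; then $\mathcal{E}^{(j)}$ is exactly conserved and the threshold is never reached, consistent with reading your bound as $t_\rho^{(j)}=+\infty$.
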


\subsection{Baseline LQR and exponential stability of the linear system}

Define the first-order state \(x=(q,\dot q)\in\mathbb{R}^{2N}\) and the linear system
\begin{equation}
	\dot x = A x + G a,
	\qquad
	A=\begin{bmatrix}0&I\\ -\Lambda&0\end{bmatrix},
	\quad
	G=\begin{bmatrix}0\\ b\end{bmatrix}.
	\label{eq:ss_lqr}
\end{equation}
Consider the quadratic cost functional
\begin{equation}
	J(a)=\int_0^\infty \left(x^\top Q x + a^\top R a\right)\,dt,
	\qquad Q\succeq 0,\; R\succ 0.
	\label{eq:lqr_cost}
\end{equation}
If the pair \((A,G)\) is stabilizable and \((Q^{1/2},A)\) is detectable, then
there exists a unique matrix \(P\succeq 0\) solving the Algebraic Riccati Equation
\begin{equation}
	A^\top P + P A - P G R^{-1} G^\top P + Q = 0,
	\label{eq:ARE}
\end{equation}
and the optimal closed-loop law is
\begin{equation}
	a(t) = -R^{-1}G^\top P\,x(t) =: -K_{\mathrm{LQR}}x(t),
	\label{eq:lqr_law}
\end{equation}
which ensures exponential stability of the unconstrained closed-loop system
\(\dot x=(A-GK_{\mathrm{LQR}})x\).

\begin{remark}[Comparison under saturation]
	In the numerical implementation, the LQR controller is evaluated with the
	same amplitude constraint \(a\in[-a_{\max},a_{\max}]\) as PPO. Under this
	saturation, the resulting controller is no longer optimal for the
	unconstrained quadratic problem \eqref{eq:lqr_cost}, but it provides a
	standard deterministic baseline under actuator limits.
\end{remark}

\subsection{Interpretation of PPO as effective active damping}

In the considered MDP, the reward is
\[
r(s,a)=-(\mathcal{E}_N(s)+\mu a^2).
\]
Hence maximizing the return is equivalent to minimizing a weighted
combination of modal energy and control effort. Although this objective
does not provide a Lyapunov certificate by itself, it promotes feedback
policies that reduce energy along closed-loop trajectories.

From this viewpoint, PPO may be interpreted as a data-driven mechanism
for learning nonlinear state-dependent feedback laws that act
dissipatively on the reduced-order system. This interpretation becomes
more explicit under additional structural conditions on the learned
policy, as discussed below.

\subsection{Practical stability: saturation, penalization, and clipping}

The numerical implementation enforces an amplitude constraint
\[
|a(t)|\le a_{\max}.
\]
Moreover, the PPO objective includes the penalization term \(\mu a^2\),
which discourages large-magnitude actions.

PPO clipping constrains the policy \emph{update} in distributional space
and should not be confused with action saturation, which constrains the
\emph{applied control}. Both mechanisms contribute to stability in
practice: clipping stabilizes learning, while saturation enforces
physical feasibility.

A sufficient local condition for instantaneous dissipation is to choose
the action with sign opposite to the quantity \(\dot q^\top b\) in
\eqref{eq:ENdot_open}. In particular, if
\begin{equation}
	a(t) = -k\,\mathrm{sat}\big(\dot q(t)^\top b\big),
	\qquad k>0,
	\label{eq:energy_shaping}
\end{equation}
where \(\mathrm{sat}(\cdot)\) is a scalar saturation, then
\[
\dot{\mathcal{E}}_N(t)=\dot q^\top b\,a(t)\le 0,
\]
with strict inequality whenever \(\dot q^\top b\neq 0\) and the saturation
does not annihilate the product.

\begin{remark}[Connection with PPO]
	Although the damping structure \eqref{eq:energy_shaping} is not imposed
	explicitly, the reward formulation encourages policies that reduce the
	velocity channel \(b^\top\dot q\). As observed in the numerical
	experiments, the PPO policy tends to generate state-dependent
	damping-like behavior along this channel, leading to effective energy
	dissipation.
\end{remark}

The equivalence established in Theorem~\eqref{Theo:1} shows that the
reinforcement-learning formulation is consistent with the underlying
optimal control objective. The following proposition provides a
sufficient condition under which the learned feedback can be interpreted
as an effective nonlinear damping mechanism on the actuated channel.

Under the feedback law $a(t)=\mu_\theta(q(t),\dot q(t))$, the closed-loop
dynamics become
\[
\ddot q(t)+\Lambda q(t)=b\,\mu_\theta(q(t),\dot q(t)).
\]

\begin{Proposition}[Emergent nonlinear active damping mechanism]
	\label{prop:active_damping}
	Consider the modal system \eqref{eq:rom_vec} with modal energy
	\(\mathcal{E}_N\) and power channel
	\[
	v(t)=b^\top \dot q(t).
	\]
	If the learned policy $a(t)=\mu_\theta(q(t),\dot q(t))$ satisfies
	the sector condition
	\[
	\mu_\theta(q,\dot q)\,v \le -\eta v^2
	\]
	for some \(\eta>0\) in the regime under consideration, then
	\[
	\dot{\mathcal{E}}_N(t)\le -\eta v(t)^2\le 0.
	\]
	Thus, the learned policy acts as a nonlinear active damper on the
	actuated velocity channel.
\end{Proposition}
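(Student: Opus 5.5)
The plan is to reuse the open-loop energy identity \eqref{eq:ENdot_open} and substitute the learned feedback into it, exactly as in the proof of Proposition~\ref{prop:colocated_damping}. The only new ingredient is that the linear feedback is replaced by a sector inequality.

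\textbf{Step 1 (power identity).} Along any sufficiently regular solution of the closed loop $\ddot q+\Lambda q=b\,\mu_\theta(q,\dot q)$, compute $\frac{d}{dt}\mathcal{E}_N=\dot q^\top(\ddot q+\Lambda q)$ as in \eqref{eq:ENdot_open}. This gives
\[
\dot{\mathcal{E}}_N(t)=\dot q(t)^\top b\,\mu_\theta(q(t),\dot q(t))=\mu_\theta(q(t),\dot q(t))\,v(t),
\]
using only that $a$ is scalar, so $\dot q^\top b\,a=a\,v$.

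\textbf{Step 2 (sector bound).} As long as the trajectory stays in the regime where the sector condition holds, apply it pointwise in time to obtain $\dot{\mathcal{E}}_N(t)\le-\eta v(t)^2$. The final inequality $-\eta v^2\le 0$ follows from $\eta>0$.

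\textbf{Step 3 (interpretation).} Integrating in time gives
\[
\mathcal{E}_N(t)+\eta\int_0^t v(s)^2\,ds\le\mathcal{E}_N(0).
\]
This justifies the phrase ``nonlinear active damper'': energy is removed at a rate at least $\eta v^2$ through the channel $\mathrm{span}\{b\}$. This parallels the linear case of Proposition~\ref{prop:colocated_damping}, which is recovered with $\mu_\theta=-k v$ and $\eta=k$.

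\textbf{Main obstacle: well-posedness of the closed loop.} The computation in Step 1 needs $t\mapsto(q,\dot q)$ to be $C^1$. Since $\mu_\theta$ is $C^\infty$, being a composition of affine maps and $\tanh$ (see the remark on deterministic evaluation), the vector field is locally Lipschitz. Picard--Lindel\"of therefore gives a unique local $C^1$ solution.

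Moreover, $\mu_\theta$ is globally bounded, because its output layer is affine in bounded $\tanh$ features. Hence the right-hand side grows at most linearly and solutions are global. Alternatively, the energy bound from Step 3 confines the trajectory to a sublevel set of $\mathcal{E}_N$, which is compact because $\Lambda\succ0$; this also rules out blow-up, provided that set lies in the sector regime.

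The phrase ``in the regime under consideration'' is the delicate point. I would read it as a forward-invariant region, for instance the energy sublevel set $\{\mathcal{E}_N\le\mathcal{E}_N(0)\}$ intersected with the validity region of the sector bound. If the sector condition holds on that whole sublevel set, invariance follows from the inequality itself by a continuity/first-exit-time argument. Otherwise, the conclusion is stated only on the time interval during which the trajectory remains in the regime.
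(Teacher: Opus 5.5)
Your argument is correct and is the paper's proof step for step: the power identity $\dot{\mathcal{E}}_N=\dot q^\top(\ddot q+\Lambda q)=a\,v$, then the pointwise sector bound $\dot{\mathcal{E}}_N\le-\eta v^2$, then integration over an interval on which the trajectory stays in the regime. Your extra well-posedness and forward-invariance discussion goes beyond the paper, which simply assumes classical trajectories, and it is sound; one caveat is that the first-exit-time argument at boundary points where $v=0$ (so $\dot{\mathcal{E}}_N=0$) needs the local Lipschitz property of the closed-loop vector field through a Gronwall or Nagumo-type argument, since continuity alone is not enough.
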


\begin{proof}
	We divide the proof into three steps.
	
	\medskip
	\noindent\textbf{Step 1: Power balance identity.}
	Differentiating \eqref{eq:EN} along classical trajectories of
	\eqref{eq:rom_vec} gives
	\[
	\dot{\mathcal{E}}_N(t)
	=
	\dot q(t)^\top \ddot q(t)+q(t)^\top \Lambda \dot q(t).
	\]
	Using \(\ddot q(t)=-\Lambda q(t)+b\,a(t)\), we obtain
	\[
	\dot{\mathcal{E}}_N(t)
	=
	\dot q(t)^\top\bigl(-\Lambda q(t)+b\,a(t)\bigr)
	+
	q(t)^\top \Lambda \dot q(t).
	\]
	Since \(\Lambda\) is symmetric, the conservative terms cancel, and hence
	\[
	\dot{\mathcal{E}}_N(t)=a(t)\,b^\top\dot q(t)=a(t)\,v(t).
	\]
	
	\medskip
	\noindent\textbf{Step 2: Dissipation under the sector condition.}
	Under the learned feedback \(a(t)=\mu_\theta(q(t),\dot q(t))\), the
	previous identity becomes
	\[
	\dot{\mathcal{E}}_N(t)=\mu_\theta(q(t),\dot q(t))\,v(t).
	\]
	Therefore, by the assumed sector condition,
	\[
	\dot{\mathcal{E}}_N(t)\le -\eta\,v(t)^2\le 0.
	\]
	
	\medskip
	\noindent\textbf{Step 3: Integral consequence.}
	Integrating over any interval \([t_1,t_2]\) contained in the regime under
	consideration yields
	\[
	\mathcal{E}_N(t_2)-\mathcal{E}_N(t_1)
	\le
	-\eta\int_{t_1}^{t_2} v(s)^2\,ds,
	\]
	which shows monotonic decay of \(\mathcal{E}_N\) and square-integrability
	of the actuated channel on that interval.
\end{proof}

\begin{remark}[Control interpretation]
	Proposition~\ref{prop:active_damping} shows that a negative correlation
	between the actuation and the velocity channel \(v=b^\top\dot q\)
	induces energy dissipation. In this sense, the learned policy can be
	interpreted as a nonlinear active damping mechanism. This behavior is
	assessed numerically in Section~\ref{sec:numerical_results} through
	energy decay and control profiles.
\end{remark}

\subsection{Comparison between PPO and the Linear Quadratic Regulator (LQR)}
\label{sec:ppo_vs_lqr}

We compare the classical linear quadratic regulator (LQR) and the
reinforcement learning-based controller obtained via Proximal Policy
Optimization (PPO) within the reduced-order modal framework. The goal is
to clarify their structural differences and to provide a consistent
baseline for the numerical experiments.

\begin{figure}[ht]
	\centering
	\begin{tikzpicture}[
		node distance=10mm and 18mm,
		>=Latex,
		box/.style={draw, rounded corners=2pt, align=center, minimum width=5.1cm, minimum height=1.1cm},
		head/.style={draw, rounded corners=2pt, align=center, minimum width=5.1cm, minimum height=0.9cm, fill=gray!10},
		arrow/.style={->, thick}
		]
		
		\node[head] (h1) {Classical control (LQR)};
		\node[box, below=of h1] (model) {Known model\\
			$\dot x=Ax+Ga$};
		\node[box, below=of model] (ric) {Riccati equation\\
			$A^\top P + P A - P G R^{-1} G^\top P + Q = 0$};
		\node[box, below=of ric] (law) {Control law\\
			$a(t) = -K_{\mathrm{LQR}}x(t)$};
		
		\draw[arrow] (model) -- (ric);
		\draw[arrow] (ric) -- (law);
		
		\node[head, right=5.2cm of h1] (h2) {RL-based control (PPO)};
		\node[box, below=of h2] (env2) {Environment (simulator)\\
			$s_{k+1}=P(s_k,a_k)$};
		\node[box, below=of env2] (ppo2) {PPO training\\
			$\pi_\theta(a\mid s)$ + GAE + clipping};
		\node[box, below=of ppo2] (law2) {Learned law\\
			$a_k\sim\pi_{\theta^\ast}(\cdot\mid s_k)$};
		
		\draw[arrow] (env2) -- (ppo2);
		\draw[arrow] (ppo2) -- (law2);
		
		\draw[<->, thick] (model.east) -- node[above, align=center] {model-based vs.\\data-driven} (env2.west);
		\draw[<->, thick] (law.east) -- node[below, align=center] {linear vs. nonlinear\\feedback} (law2.west);
		
	\end{tikzpicture}
	\caption{Conceptual comparison between LQR and PPO.}
	\label{fig:lqr_vs_ppo}
\end{figure}

\medskip
\noindent\textbf{Structural viewpoint.}
LQR is a model-based method: given the linear system \(\dot x=Ax+Ga\),
it computes an optimal linear feedback
\[
a(t)=-K_{\mathrm{LQR}}x(t),
\]
via the algebraic Riccati equation. In contrast, PPO is a data-driven
approach that learns a policy
\[
a_k\sim\pi_\theta(\cdot\mid s_k),
\]
through interaction with the simulator, without requiring an explicit
analytical model.

\medskip
\noindent\textbf{Energetic viewpoint.}
LQR minimizes a quadratic cost that directly penalizes modal energy and
control effort, yielding a Lyapunov function and exponential stability
for the unconstrained linear system. PPO, by contrast, optimizes a
finite-horizon discounted return with a reward that penalizes energy and
actuation. Although no explicit Lyapunov function is constructed, the
reward structure promotes trajectories with decreasing energy, as
discussed above.

\medskip
\noindent\textbf{Practical viewpoint (constraints and implementation).}
LQR solves an infinite-horizon optimal control problem under the
assumption of unconstrained inputs. In the present setting, the feedback
is evaluated under saturation, so that the implemented controller is no
longer strictly optimal. PPO, in contrast, is trained directly under the
same actuator constraints and naturally incorporates them into the
learned policy. Consequently, the comparison should be interpreted as a
study of two feasible controllers under identical saturation limits,
rather than a comparison of optimal solutions.

\subsection{Empirical stability criteria}

To assess closed-loop stability and performance, we monitor:
\begin{itemize}
	\item the temporal decay of \(\mathcal{E}_N(t)\);
	\item the magnitude and smoothness of the control \(a(t)\);
	\item the decay time and accumulated energy relative to the LQR baseline.
\end{itemize}

In particular, we define
\[
\mathcal{J}_E = \int_0^T \mathcal{E}_N(t)\,dt,
\qquad
\mathcal{J}_a = \int_0^T a(t)^2\,dt,
\]
and compare \((\mathcal{J}_E,\mathcal{J}_a)\) between PPO and LQR under the
same initial conditions and the same amplitude constraints.

In the numerical post-processing, these metrics are computed on a common
cropped interval \([t_0,t_{\mathrm{end}}]\) to eliminate boundary artifacts
introduced by finite-horizon effects and visual filtering. This ensures
that all controllers are compared on the same evaluation window.

\section{Numerical results}
\label{sec:numerical_results}

In this section, we report numerical experiments for the reduced-order beam
model in closed loop with a reinforcement-learning controller trained via PPO.
A deterministic LQR feedback is used as a baseline. All simulations follow the
energetic and algorithmic framework introduced in the previous sections.

A key methodological point is that we deliberately \emph{separate} (i) the
post-processing used to improve the visual readability of oscillatory time
signals in the figures from (ii) the computation of performance metrics.
Throughout, the actuator is set to the \emph{DC-capable} configuration
(\texttt{remove\_dc=False}), which is the definitive setting used in all
reported tables.

\subsection{Numerical setup}

The continuous beam model is approximated by a reduced-order model obtained via
Galerkin projection onto the bending eigenmodes. We consider odd modes only
($n=1,3,5,\dots$) and truncations $N\in\{1,2,3\}$, where $N$ denotes the number
of retained odd modes. The reduced dynamics read
\[
\ddot q(t) + \Lambda q(t) = b\,a(t), \qquad q(t)\in\mathbb{R}^{N},
\]
where $\Lambda=\mathrm{diag}(\lambda_1,\dots,\lambda_N)$ and $b\in\mathbb{R}^N$
is the rank-one boundary actuation vector induced by the chosen boundary
input channel. For $N=1$ this reduces to
\[
\ddot q_1(t) + \lambda_1 q_1(t) = b_1 a(t).
\]

The \emph{physical} modal energy is defined by
\[
\mathcal{E}_N(t)=\tfrac12\Big(\|\dot q(t)\|^2 + q(t)^\top \Lambda q(t)\Big),
\qquad
\mathcal{E}_1(t)=\tfrac12\big(\dot q_1(t)^2+\lambda_1 q_1(t)^2\big).
\]
Initial conditions are chosen as small random perturbations
$q(0),\dot q(0)\sim\mathcal{O}(10^{-2})$, yielding a nontrivial transient regime.

\noindent\textbf{Physical and discretization parameters.}
The horizon is $T=20$\,s, the time step is $\Delta t=0.01$, and the dynamics are
integrated by a fixed-step fourth-order Runge--Kutta (RK4) scheme. The control
action is saturated as $a(t)\in[-a_{\max},a_{\max}]$ with $a_{\max}=20$.

\subsection{Simulation environment and control constraints}

The environment is deterministic and fully observable. For each truncation
$N$, the state is
\[
s_k=(q_k,\dot q_k)\in\mathbb{R}^{2N},
\qquad t_k = k\Delta t, \qquad k=0,\dots,K-1,\quad K=T/\Delta t,
\]
and the scalar action approximates the boundary acceleration input,
\[
a_k \approx \ddot u(t_k), \qquad a_k\in[-a_{\max},a_{\max}].
\]
This identical saturation constraint is enforced for all controllers (PPO, LQR,
and the uncontrolled case $a\equiv 0$), ensuring a consistent comparison. In the simulator, an optional ``drift-killer'' can be activated to remove
the DC component of the control signal, effectively producing an
AC-coupled actuation model. In the experiments reported here this option
is disabled (\texttt{remove\_dc=False}), so the controllers operate in the
DC-capable setting and may exploit low-frequency components of the
actuation. The reported metrics therefore correspond to the true
saturated control signal applied to the reduced-order model.

\subsection{Controllers and evaluation protocol}

\noindent\textbf{PPO training configuration.}
PPO is implemented with \texttt{stable-baselines3}. The hyperparameters are:
\begin{itemize}
	\item rollout length $n_{\text{steps}}=2048$;
	\item minibatch size $256$;
	\item learning rate $3\times10^{-4}$;
	\item clipping parameter $\varepsilon=0.2$;
	\item discount factor $\gamma=0.99$;
	\item GAE parameter $\lambda=0.95$;
	\item gradient clipping $\texttt{max\_grad\_norm}=0.5$;
	\item entropy coefficient $\texttt{ent\_coef}=0$.
\end{itemize}
Observations are normalized online using \texttt{VecNormalize}. This affects
only the network input and does \emph{not} modify the physical variables
$(q,\dot q)$ used to compute energies and metrics.

The policy and value function are parameterized by a two-layer MLP with
architecture $64$--$64$ and $\tanh$ activations. The reward penalizes
instantaneous energy and control effort, namely
\[
r_k = -\Big(c_E\,\mathcal{E}_N(t_k) + \mu\,a_k^2\Big),
\]
with fixed weights $c_E=1$ and $\mu=10^{-2}$.

\noindent\textbf{LQR baseline.}
For each truncation $N\in\{1,2,3\}$, an LQR controller is designed for the
corresponding $2N$-dimensional state $x=(q,\dot q)$ using a quadratic cost that
penalizes modal displacement, modal velocity, and control effort. The resulting
linear feedback $a(t)=-Kx(t)$ is then implemented with the \emph{same}
saturation $|a|\le a_{\max}$ as PPO. Hence the LQR baseline is optimal for the
associated \emph{unsaturated} quadratic problem, but is evaluated here only
after projection onto the same hard actuation bounds imposed on PPO.

\medskip
\noindent\textbf{Common evaluation window.}
To avoid initialization and end-of-horizon artifacts in time-series summaries,
all metrics are computed on the common cropped interval
\[
[t_0,t_{\mathrm{end}}],\qquad t_0=3\tau_{\mathrm{hp}},\qquad
t_{\mathrm{end}}=T-\texttt{extra\_end},
\]
with the numerical values $\tau_{\mathrm{hp}}=0.25$ and $\texttt{extra\_end}=1.0$,
so that $t_0=0.75$ and $t_{\mathrm{end}}=19.0$.

For visualization purposes, the time-series shown in
Figures~\ref{fig:q1_ppo}--\ref{fig:control_ppo} are displayed after
high-pass filtering and using the same crop window. This filtering
removes quasi-static or low-frequency trends and improves the
readability of the oscillatory dynamics. However, all reported metrics
and tables are computed from the unfiltered physical trajectories
$(q(t),\dot q(t),a(t))$ and from the physical energy $\mathcal{E}_N(t)$.
No filtered signals are used in the computation of the performance
indices such as $\mathcal{J}_E$.
\subsection{Performance metrics}

Closed-loop performance is quantified on $[t_0,t_{\mathrm{end}}]$ by:
\begin{itemize}
	\item physical modal energy $\mathcal{E}_N(t)$;
	\item accumulated energy
	\[
	\mathcal{J}_E = \int_{t_0}^{t_{\mathrm{end}}} \mathcal{E}_N(t)\,dt;
	\]
	\item accumulated control effort
	\[
	\mathcal{J}_a = \int_{t_0}^{t_{\mathrm{end}}} a(t)^2\,dt;
	\]
	\item peak control amplitude $\|a\|_\infty$ on the same interval;
	\item decay times $t_{10\%}$ and $t_{1\%}$ defined as the first times in
	$[t_0,t_{\mathrm{end}}]$ such that $\mathcal{E}_N(t)\le 0.1\,\mathcal{E}_N(t_0)$
	and $\mathcal{E}_N(t)\le 0.01\,\mathcal{E}_N(t_0)$, respectively.
\end{itemize}

The numerical values are reported in Table~\ref{tab:metrics_ppo_lqr} for the
single-mode comparison $N=1$, and in Table~\ref{tab:metrics_multimode} for the
full comparison across truncations $N=1,2,3$, including uncontrolled dynamics,
saturated LQR, and PPO.

\subsection{Results and discussion: single-mode case ($N=1$)}

\noindent\textbf{Closed-loop mechanism and modal vibration control.}
The filtered time histories in Figures~\ref{fig:q1_ppo}--\ref{fig:control_ppo}
provide a physically interpretable picture of the learned feedback in the
single-mode regime. The modal coordinate $q_1(t)$ exhibits a progressive decay
of oscillation amplitude, the energy-like filtered signal shows attenuation of
the oscillatory content, and the control action remains bounded while oscillating
with a phase relation consistent with energy extraction from the mode.

This behavior is characteristic of \emph{active modal vibration control}.
Indeed, for $N=1$ the reduced dynamics
\[
\ddot q_1(t)+\lambda_1 q_1(t)=b_1 a(t)
\]
describe a single controlled oscillator, and the PPO policy effectively learns a
state-dependent forcing law that, on average, removes energy from the modal
motion. In this sense, the learned feedback acts as a nonlinear damping
mechanism rather than merely as a generic black-box controller.

Although Figures~\ref{fig:q1_ppo}--\ref{fig:control_ppo} are shown only for
$N=1$, the same interpretation remains qualitatively relevant for higher
truncations. For $N=2$, the reduction of $\mathcal{J}_E$ and the corresponding
energy plots indicate that PPO still performs net vibration attenuation of the
dominant modal content, albeit less effectively than in the single-mode case.
For $N=3$, by contrast, the metrics and energy curves show that this damping
mechanism is no longer sufficiently coordinated across all retained modes,
so that the clear attenuation effect observed for $N=1$ is lost.

The high-pass representation is used here purely to suppress slow components and
make the phase relation visually readable; the quantitative conclusions,
however, are those reported in Table~\ref{tab:metrics_ppo_lqr}, which are
computed from the unfiltered physical signals.

\begin{figure}[h!]
	\centering
	\includegraphics[width=0.92\textwidth]{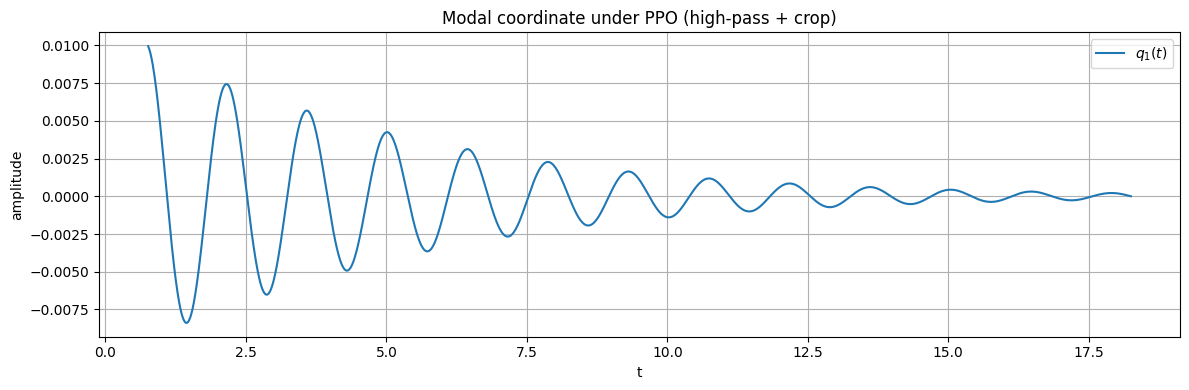}
	\caption{PPO closed-loop response (high-pass + crop, for visualization): modal coordinate $q_1(t)$.}
	\label{fig:q1_ppo}
\end{figure}

\begin{figure}[h!]
	\centering
	\includegraphics[width=0.92\textwidth]{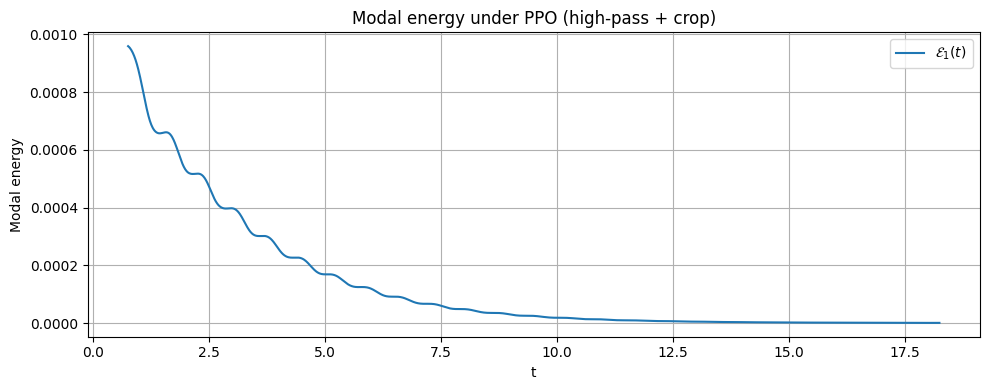}
	\caption{PPO closed-loop response (high-pass + crop, for visualization): modal energy-like signal highlighting oscillatory decay.}
	\label{fig:energy_ppo}
\end{figure}

\begin{figure}[h!]
	\centering
	\includegraphics[width=0.92\textwidth]{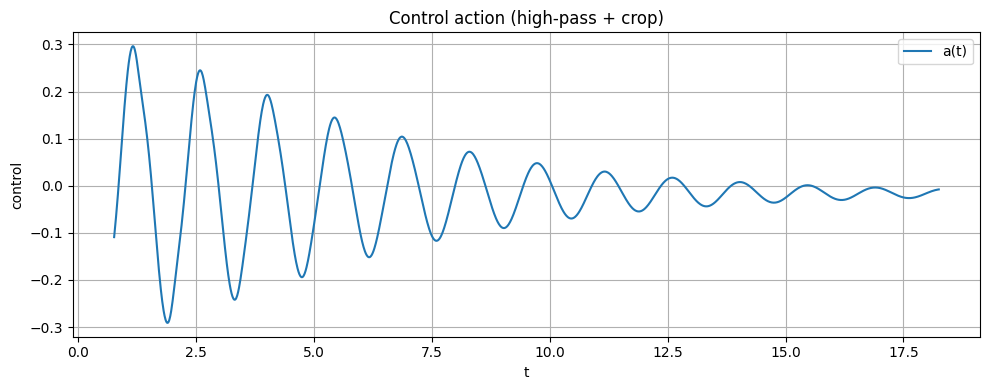}
	\caption{PPO closed-loop response (high-pass + crop, for visualization): control action $a(t)$.}
	\label{fig:control_ppo}
\end{figure}

Figure~\ref{fig:energy_compare} and Table~\ref{tab:metrics_ppo_lqr} show that,
in the single-mode regime, PPO clearly outperforms both the uncontrolled
response and the saturated LQR baseline, while respecting the same hard
actuation bounds.

\noindent\textbf{Energy attenuation and transient times.}
Compared with the uncontrolled dynamics, PPO reduces the accumulated energy from
$\mathcal{J}_E=6.719\times10^{-2}$ to $\mathcal{J}_E=9.005\times10^{-3}$,
corresponding to an energy reduction of approximately $86.6\%$ over the common
evaluation window. Relative to the saturated LQR baseline, PPO further reduces
$\mathcal{J}_E$ by about $61.7\%$
($2.354\times10^{-2}\mapsto9.005\times10^{-3}$). This superiority is also
visible in the transient decay indicators: PPO reaches the $10\%$ level at
$t_{10\%}\approx 7.58$\,s, whereas saturated LQR only reaches this threshold at
$t_{10\%}\approx 18.58$\,s, i.e.\ essentially at the end of the horizon. In
addition, PPO reaches the stricter $1\%$ level within the horizon
($t_{1\%}\approx 13.97$\,s), while LQR does not reach this threshold at all
under saturation.

\noindent\textbf{Control-effort trade-off under saturation.}
This improved damping performance is obtained at the price of larger actuation.
Indeed, PPO uses more integrated control effort than LQR,
$\mathcal{J}_a=1.256\times10^{-1}$ versus $4.700\times10^{-2}$, and also
exhibits a larger peak amplitude,
$\|a\|_\infty=2.474\times10^{-1}$ versus $1.132\times10^{-1}$. Hence, in the
single-mode setting, PPO acts as a more aggressive but substantially more
effective nonlinear damping mechanism than the saturated linear-quadratic
baseline.

\begin{figure}[h!]
	\centering
	\includegraphics[width=0.92\textwidth]{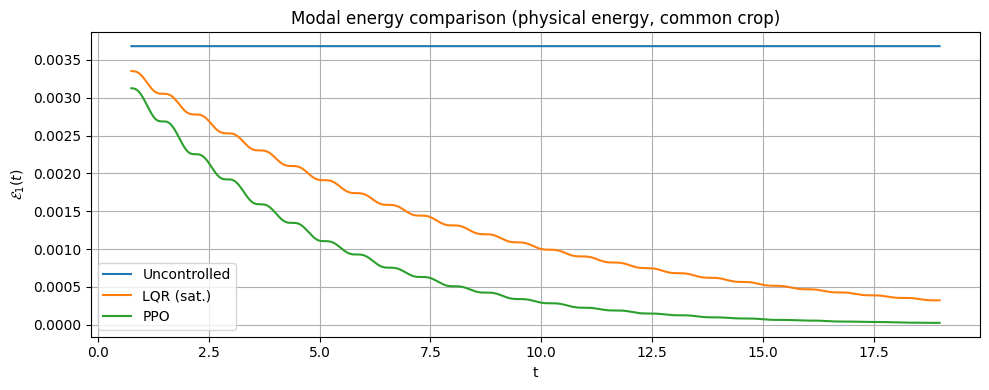}
	\caption{Modal energy comparison (physical energy, common crop): uncontrolled, saturated LQR, and PPO for $N=1$.}
	\label{fig:energy_compare}
\end{figure}

\begin{table}[h!]
	\centering
	\caption{Performance metrics for $N=1$ on the common cropped interval $[t_0,t_{\mathrm{end}}]$ (physical energy, \texttt{remove\_dc=False}).}
	\label{tab:metrics_ppo_lqr}
	\begin{tabular}{lccccc}
		\toprule
		Controller
		& $\mathcal{J}_E$
		& $\mathcal{J}_a$
		& $\|a\|_\infty$
		& $t_{10\%}$ (s)
		& $t_{1\%}$ (s)\\
		\midrule
		Uncontrolled & 6.719e-02 & 0.000e+00 & 0.000e+00 & --        & --        \\
		LQR (sat.)   & 2.354e-02 & 4.700e-02 & 1.132e-01 & 1.858e+01 & --        \\
		PPO          & 9.005e-03 & 1.256e-01 & 2.474e-01 & 7.580e+00 & 1.397e+01 \\
		\bottomrule
	\end{tabular}
\end{table}

\subsection{Multimodal truncations and performance degradation ($N=2,3$)}

We now assess how the same PPO architecture behaves as the truncation dimension
increases. Table~\ref{tab:metrics_multimode} and
Figure~\ref{fig:energy_multimode} indicate a progressive loss of control quality
from the single-mode case to higher-dimensional truncations.

\noindent\textbf{$N=2$: quantitative gain in accumulated energy, but no strong decay regime.}
For $N=2$, PPO still improves upon both baselines in terms of accumulated
energy. In particular, $\mathcal{J}_E$ decreases from
$1.566\times10^{-1}$ for the uncontrolled dynamics to
$8.873\times10^{-2}$ for PPO, which corresponds to an improvement of about
$43.3\%$. PPO also improves over saturated LQR, reducing $\mathcal{J}_E$ from
$1.050\times10^{-1}$ to $8.873\times10^{-2}$, i.e.\ by roughly $15.5\%$.
Thus, for $N=2$, PPO yields the smallest accumulated energy among the three
controllers.

However, neither PPO nor LQR reaches the $10\%$ or $1\%$ decay thresholds within
the evaluation horizon. This shows that, although PPO still achieves net modal
vibration attenuation in an integral sense, the strong damping regime observed
for $N=1$ is no longer present. The improved energy metric is again accompanied
by higher effort: PPO requires
$\mathcal{J}_a=2.559\times10^{-1}$ and $\|a\|_\infty=3.568\times10^{-1}$,
both larger than the corresponding LQR values
$\mathcal{J}_a=2.124\times10^{-1}$ and $\|a\|_\infty=2.206\times10^{-1}$.

\noindent\textbf{$N=3$: loss of PPO superiority and near-neutral energy performance.}
For $N=3$, the PPO policy no longer provides a meaningful energy advantage.
Indeed, the accumulated energy is slightly larger than in the uncontrolled case,
$\mathcal{J}_E=3.049$ versus $3.004$, so PPO does not achieve net damping
improvement on this truncation. On the other hand, PPO is considerably less
aggressive than the saturated LQR baseline: while LQR attains a slightly smaller
energy value ($\mathcal{J}_E=2.928$), it does so at the expense of a much larger
control effort, with $\mathcal{J}_a=5.848$ and
$\|a\|_\infty=9.442\times10^{-1}$, compared to
$\mathcal{J}_a=8.553\times10^{-1}$ and
$\|a\|_\infty=3.962\times10^{-1}$ for PPO. Therefore, the $N=3$ case is better
interpreted not as a clear instability mechanism, but as a regime in which the
learned policy is no longer sufficiently effective to coordinate all retained
modal vibrations and preserve the strong damping advantage seen for lower
truncations.

\noindent\textbf{Why multimodal control becomes harder.}
This progressive deterioration is consistent with three structural features of
the problem:
\begin{itemize}
	\item \textbf{Rank-one boundary actuation:} the single control input $a(t)$ must
	act simultaneously on all retained modal coordinates through the fixed vector
	$b$, which severely limits the number of independently damped directions as
	$N$ increases.
	\item \textbf{Separated modal time scales:} higher truncations involve faster
	oscillatory components, so that a single stationary policy must respond across
	distinct frequencies and phase relations.
	\item \textbf{Fixed policy capacity and training budget:} using the same
	$64$--$64$ architecture and the same learning budget for $N=1,2,3$ does not
	scale with the increased state-space complexity.
\end{itemize}
These mechanisms explain why PPO is highly effective for $N=1$, still beneficial
for $N=2$, but loses its clear advantage for $N=3$ under the present
configuration.

\begin{table}[h!]
	\centering
	\caption{Performance metrics across truncation orders on the common cropped interval $[t_0,t_{\mathrm{end}}]$ (physical energy, \texttt{remove\_dc=False}).}
	\label{tab:metrics_multimode}
	\begin{tabular}{ccccccc}
		\toprule
		$N$ & Controller & $\mathcal{J}_E$ & $\mathcal{J}_a$ & $\|a\|_\infty$ & $t_{10\%}$ (s) & $t_{1\%}$ (s)\\
		\midrule
		1 & Uncontrolled & 6.719e-02 & 0.000e+00 & 0.000e+00 & -- & -- \\
		1 & LQR (sat.)   & 2.354e-02 & 4.700e-02 & 1.132e-01 & 1.858e+01 & -- \\
		1 & PPO          & 9.005e-03 & 1.256e-01 & 2.474e-01 & 7.580e+00 & 1.397e+01 \\
		\midrule
		2 & Uncontrolled & 1.566e-01 & 0.000e+00 & 0.000e+00 & -- & -- \\
		2 & LQR (sat.)   & 1.050e-01 & 2.124e-01 & 2.206e-01 & -- & -- \\
		2 & PPO          & 8.873e-02 & 2.559e-01 & 3.568e-01 & -- & -- \\
		\midrule
		3 & Uncontrolled & 3.004e+00 & 0.000e+00 & 0.000e+00 & -- & -- \\
		3 & LQR (sat.)   & 2.928e+00 & 5.848e+00 & 9.442e-01 & -- & -- \\
		3 & PPO          & 3.049e+00 & 8.553e-01 & 3.962e-01 & -- & -- \\
		\bottomrule
	\end{tabular}
\end{table}

\begin{figure}[h!]
	\centering
	\includegraphics[width=0.92\textwidth]{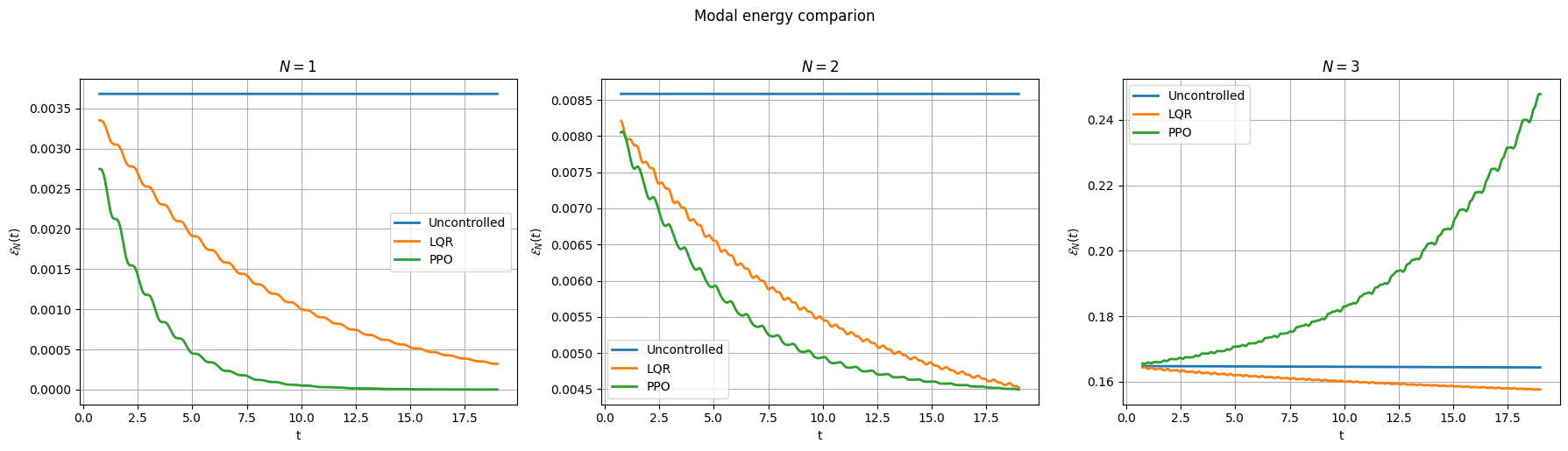}
	\caption{Modal energy comparison for $N=1,2,3$: in each truncation, the three curves correspond to uncontrolled dynamics, saturated LQR, and PPO on the common cropped interval.}
	\label{fig:energy_multimode}
\end{figure}

\noindent\textbf{Run-to-run variability.}
Even with fixed seeds, PPO training may exhibit moderate variability due to
the nonconvex neural optimization process and the use of observation
normalization. The qualitative ranking observed here strong improvement
for $N=1$, weaker but still positive performance for $N=2$, and loss of
superiority for $N=3$  appears robust across runs, although publication-level
reporting should include averages and standard deviations over multiple
independent trainings.

\noindent\textbf{Scope.}
The present study focuses on low-order reduced-order models, where the
dominant beam dynamics are typically concentrated in the first modes.
The deterioration observed for $N\geq3$ does not invalidate the successful
single-mode result; rather, it highlights the increased difficulty of
multimodal coordination under rank-one boundary actuation. Addressing this
regime may require richer policy architectures, larger training budgets,
or alternative actuation mechanisms.


\section{Conclusion}

In this study, boundary control via reinforcement learning was investigated for a modified Timoshenko system, which was approximated through a Galerkin truncation. Initially, results concerning well-posedness, existence, and uniqueness were presented for the considered problem. Subsequently, the control problem was formulated in terms of a Markov Decision Process, and an analytical study of the PPO algorithm and its characterization within the context of the considered control problem was carried out.

Next, the closed-loop energy stability was investigated in order to interpret the role of PPO in the energy stabilization process, including a comparison with an optimal baseline given by the LQR controller. In the numerical part, experiments were conducted that support the theoretical analysis developed in the previous sections, from which the following conclusions were obtained.

For $N=1$, PPO substantially outperforms both the uncontrolled response and the saturated LQR baseline in terms of physical energy attenuation, reducing $\mathcal{J}_E$ and reaching both $t_{10\%}$ and $t_{1\%}$ well within the evaluation horizon, although at the cost of higher control effort. In this regime, the learned policy clearly behaves as an effective modal vibration controller with a nonlinear damping character.

For $N=2$, PPO remains beneficial and improves upon both the uncontrolled dynamics and the saturated LQR in terms of accumulated energy, but it no longer induces a strong decay regime capable of reaching the $10\%$ or $1\%$ thresholds. Nevertheless, the controller still attenuates the dominant modal vibrations, although less sharply than in the single-mode case.

For $N=3$, PPO loses its energetic advantage: its accumulated energy becomes slightly worse than in the uncontrolled case, although it requires significantly less effort than the saturated LQR. This indicates that, under the architecture and training budget adopted here, the learned feedback is no longer sufficiently coordinated to control higher-dimensional modal interactions with the same effectiveness observed in lower truncations.

Overall, the results show that boundary control via reinforcement learning for the reduced-order model (ROM) of the Timoshenko system is highly effective for the initial modes, but exhibits a loss of performance for higher-frequency modes. This, however, does not diminish the relevance of the results, since the first modes often dominate the main oscillatory characteristics of beam vibrations.

Finally, the obtained results indicate that reinforcement learning, particularly through PPO, constitutes a promising approach for vibration control problems in distributed systems, especially when the objective is centered on the dominant modal dynamics. At the same time, the results point out important limitations in more complex multimodal regimes, opening several directions for future research. From a theoretical perspective, further investigations may focus on establishing stability guarantees for learned feedback policies and analyzing the relationship between the reduced-order dynamics and the original PDE system. From a numerical perspective, future work may explore more expressive policy architectures, refined reward strategies, different actuation mechanisms, and more extensive computational experiments to assess the robustness and scalability of the proposed approach.

\newpage
{\bf Acknowledgments:}

We would like to thank the anonymous reviewers for their careful reading and valuable suggestions, which helped to improve the clarity and overall presentation of the paper. We are grateful for the constructive feedback received, which significantly contributed to important improvements in the manuscript.

The author also acknowledges the Instituto de Matemática Pura e Aplicada (IMPA) for providing an inspiring and stimulating research environment during the summer postdoctoral visit, which greatly contributed to the development of this work.
\vspace{0.5cm}

{\bf Funding:}


\end{document}